\title{Boardman's whole-plane obstruction group \\
	for Cartan--Eilenberg systems}
\author{Gard Olav Helle and John Rognes}
\address{Department of Mathematics, University of Oslo, Norway}
\email{gardoh@student.matnat.uio.no}
\email{rognes@math.uio.no} \urladdr{http://folk.uio.no/rognes}
\subjclass[2010]{18G40}
\date{February 2nd 2018}
\newtheorem{theorem}{Theorem}[section]
\newtheorem{proposition}[theorem]{Proposition}
\newtheorem{lemma}[theorem]{Lemma}
\theoremstyle{definition}
\newtheorem{definition}[theorem]{Definition}
\theoremstyle{remark}
\newtheorem{example}[theorem]{Example}
\newtheorem{remark}[theorem]{Remark}
\numberwithin{equation}{section}
\numberwithin{figure}{section}
\let\ker\relax
\DeclareMathOperator{\ker}{Ker}
\DeclareMathOperator{\im}{Im}
\DeclareMathOperator{\cok}{Cok}
\DeclareMathOperator*{\colim}{colim}
\DeclareMathOperator{\cone}{cone}
\DeclareMathOperator*{\hocolim}{hocolim}
\DeclareMathOperator*{\holim}{holim}
\DeclareMathOperator{\id}{id}
\DeclareMathOperator{\Gap}{Gap}
\DeclareMathOperator*{\Rlim}{Rlim}
\newcommand{\bZ}{\mathbb{Z}}
\newcommand{\longto}{\longrightarrow}
\newcommand{\onto}{\twoheadrightarrow}
\newcommand{\cA}{\mathcal{A}}
\newcommand{\cC}{\mathcal{C}}
\newcommand{\cI}{\mathcal{I}}
\renewcommand{\:}{\colon}
\begin{document}
\begin{abstract}
Each extended Cartan--Eilenberg system $(H, \partial)$ gives rise to two
exact couples and one spectral sequence.  We show that the canonical
colim-lim interchange morphism associated to $H$ is a surjection,
and that its kernel is isomorphic to Boardman's whole-plane obstruction
group $W$, for each of the two exact couples.
\end{abstract}

\maketitle

\section{Introduction}

In a classic study, Boardman \cite{Boa99} analyzed the convergence of
the spectral sequence $E^r \Longrightarrow G$ arising from an exact
couple $(A, E)$ in the sense of Massey~\cite{Mas52}.  He identified a
condition on the exact couple, called conditional convergence, which
in the case of half-plane spectral sequences with exiting differentials
is sufficient to ensure strong convergence.  In the case of half-plane
spectral sequences with entering differentials, conditional convergence
and the vanishing of an obstruction group $RE^\infty$ guarantee strong
convergence.  Finally, in the case of whole-plane spectral sequences,
Boardman identified another obstruction group
$$
W = \colim_s \Rlim_r K_\infty \im^r A_s \,,
$$
such that conditional convergence and the vanishing of both $RE^\infty$
and $W$ imply strong convergence.

It is an insight of the first author that for spectral sequences
arising from Cartan--Eilenberg systems $(H, \partial)$, as defined in
\cite{CE56}, there is a natural isomorphism
$$
W \cong \ker(\kappa) \,,
$$
where $\kappa$ is the canonical colim-lim interchange morphism
$$
\kappa \: \colim_i \lim_j H(i,j) \longto \lim_j \colim_i H(i,j) \,.
$$
We prove this in Theorems~\ref{thm:W''iskerkappa} and
\ref{thm:W'iskerkappa}.  Furthermore, $\kappa$ is always a surjection,
cf.~Theorem~\ref{thm:lambda-kappa-sequence}.  We find that this
direct relationship to the interchange morphism clarifies the role of
Boardman's whole-plane obstruction group~$W$.  For example, we obtain
a straightforward proof of his criterion for the vanishing of $W$,
see Proposition~\ref{prop:kerkappazero}.

In order to introduce notations, we review spectral sequences,
exact couples and Cartan--Eilenberg systems in Sections~2, 3 and~4,
respectively.  Our treatment of convergence is very close to that
of \cite{Boa99}, but we obtain slightly more general conclusions in
particular cases.

Our novel work begins in Section~5, where we introduce the canonical
interchange morphism $\kappa$ and show that it is always surjective for
Cartan--Eilenberg systems.  In Sections~6 and~7 we consider right and
left Cartan--Eilenberg systems, and their associated exact couples $(A'',
E^1)$ and $(A', E^1)$, and identify the kernel of $\kappa$ with the
respective whole-plane obstruction groups.  In Section 8 we give some
examples of spectral sequences that arise from biinfinite sequences of
spectra, illustrating that the whole-plane obstruction $W$ can be highly
nontrivial, and giving topological interpretations of its meaning.


\section{Spectral sequences}

Let $R$ be a ring and let $\cA$ be the graded abelian category of graded
$R$-modules $M = (M_t)_t$, where $t \in \bZ$ is the \emph{internal
degree}.  (Greater abstraction is possible, but beware the counterexample
of Neeman--Deligne \cite{Nee02} to Roos' theorem \cite{Roo61}, as repaired
in \cite{Roo06}.)

\begin{definition}[Leray \cite{Ler46}, Koszul \cite{Kos47}]
A \emph{spectral sequence} is a sequence $(E^r, d^r)_r$ of differential
graded objects in $\cA$, for $r\ge1$, equipped with isomorphisms
$E^{r+1} \cong H(E^r, d^r)$.  Each term $E^r = (E^r_s)_s$ is a graded
object in $\cA$, where $s \in \bZ$ is the \emph{filtration degree}.
We assume that each differential $d^r = (d^r_s)_s$, with $d^r_s \: E^r_s
\to E^r_{s-r}$, has filtration degree~$-r$ and internal degree~$-1$.
It satisfies $d^r \circ d^r = 0$, and its homology $H(E^r, d^r)$ is
the graded object with $H_s(E^r, d^r) = \ker(d^r_s) / \im(d^r_{s+r})$
in $\cA$.
\end{definition}

\begin{lemma} \label{lem:ascdesc}
For any spectral sequence $(E^r, d^r)_r$ there are subobjects
$$
0 = B^1_s \subset \dots \subset B^r_s \subset \dots
\subset Z^r_s \subset \dots \subset Z^1_s = E^1_s
$$
in $\cA$, with $E^r_s \cong Z^r_s/B^r_s$ for all integers $r\ge1$ and $s$.
\end{lemma}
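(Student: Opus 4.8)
The plan is to argue by induction on $r$, constructing the subobjects $B^r_s$ and $Z^r_s$ of $E^1_s$ together with a compatible isomorphism $\phi^r_s \colon E^r_s \xrightarrow{\cong} Z^r_s/B^r_s$. For $r=1$ I take $Z^1_s = E^1_s$ and $B^1_s = 0$, with $\phi^1_s$ the identity, so there is nothing to check at the base of the induction.

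For the inductive step, suppose $B^r_s \subseteq Z^r_s \subseteq E^1_s$ and $\phi^r_s$ have been constructed for all $s$. Transporting $d^r_s \colon E^r_s \to E^r_{s-r}$ along $\phi^r$ yields a map $\bar d^r_s \colon Z^r_s/B^r_s \to Z^r_{s-r}/B^r_{s-r}$. I then define $Z^{r+1}_s$ to be the unique subobject of $Z^r_s$ containing $B^r_s$ with $Z^{r+1}_s/B^r_s = \ker(\bar d^r_s)$, and $B^{r+1}_s$ the unique subobject of $Z^r_s$ containing $B^r_s$ with $B^{r+1}_s/B^r_s = \im(\bar d^r_{s+r})$. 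This uses the lattice isomorphism between subobjects of the quotient $Z^r_s/B^r_s$ and subobjects of $Z^r_s$ lying over $B^r_s$, which is available in $\cA$ since $\cA$ is a category of (graded) modules, and more generally in any abelian category by the Freyd--Mitchell embedding theorem. Since $d^r \circ d^r = 0$ gives $\im(\bar d^r_{s+r}) \subseteq \ker(\bar d^r_s)$, we obtain $B^r_s \subseteq B^{r+1}_s \subseteq Z^{r+1}_s \subseteq Z^r_s$, which extends the nested chain as required.

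It remains to produce $\phi^{r+1}_s$. By definition $H_s(E^r,d^r) = \ker(d^r_s)/\im(d^r_{s+r})$ is carried by $\phi^r$ onto $\ker(\bar d^r_s)/\im(\bar d^r_{s+r}) = (Z^{r+1}_s/B^r_s)/(B^{r+1}_s/B^r_s)$, and the third isomorphism theorem identifies this with $Z^{r+1}_s/B^{r+1}_s$; composing with the structural isomorphism $E^{r+1}_s \cong H_s(E^r,d^r)$ produces $\phi^{r+1}_s \colon E^{r+1}_s \xrightarrow{\cong} Z^{r+1}_s/B^{r+1}_s$. This closes the induction, and assembling over all $s$ gives the statement. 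I do not anticipate a genuine obstacle: the only categorical inputs are the lattice isomorphism theorem and the third isomorphism theorem. The one point requiring care is the bookkeeping of which isomorphism is in use at each stage — the lemma asserts only an abstract isomorphism $E^r_s \cong Z^r_s/B^r_s$, so it suffices to transport all data along the chosen $\phi^r_s$ and never to mix in a different identification; no naturality of $\phi^r$ in $r$ is needed.
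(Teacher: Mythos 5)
Your proof is correct and follows essentially the same route as the paper's: induction on $r$, lifting $\ker(d^r_s)$ and $\im(d^r_{s+r})$ through the correspondence between subobjects of $Z^r_s/B^r_s$ and subobjects of $Z^r_s$ containing $B^r_s$, and concluding with the third isomorphism theorem. Your extra care in fixing the identifications $\phi^r_s$ and transporting the differentials along them is a reasonable elaboration of what the paper leaves implicit, not a different argument.
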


\begin{proof}
This is clear for $r=1$.  By induction on $r$ we may assume that
$E^r_s \cong Z^r_s/B^r_s$ for all $s \in \bZ$.  Then $\im(d^r_s)
\subset \ker(d^r_s) \subset E^r_s$ correspond to $B^{r+1}_s/B^r_s
\subset Z^{r+1}_s/B^r_s \subset Z^r_s/B^r_s$ for well-defined subobjects
$B^{r+1}_s$ and $Z^{r+1}_s$ of $Z^r_s$, with $B^r_s \subset B^{r+1}_s
\subset Z^{r+1}_s \subset Z^r_s$.  This completes the inductive step.
\end{proof}

\begin{definition}
For each integer~$s$, let
\begin{align*}
Z^\infty_s &= \lim_r Z^r_s \\
B^\infty_s &= \colim_r B^r_s \\
E^\infty_s &= Z^\infty_s / B^\infty_s
\end{align*}
denote the infinite cycles, the infinite boundaries, and the
$E^\infty$-term, respectively.  Following Boardman \cite{Boa99}*{(5.1)}
let
$$
RE^\infty_s = \Rlim_r Z^r_s \,.
$$
\end{definition}

\begin{remark}
Fix a filtration degree~$s$.  If $d^r_s = 0$ for all sufficiently large
$r$, then the descending sequence $(Z^r_s)_r$ is eventually constant and
$RE^\infty_s = 0$.  This argument may be applied in one internal degree
$t$ at a time.
\end{remark}

\begin{definition}
A \emph{filtration} of an object $G$ in $\cA$ is a diagram of
subobjects
$$
\dots \subset F_{s-1} G \subset F_s G \subset \dots \subset G \,,
$$
where $s \in \bZ$.  Let $F_\infty G = \colim_s F_s G$, $F_{-\infty} G =
\lim_s F_s G$ and $RF_{-\infty} G = \Rlim_s F_s G$.  The filtration
is \emph{exhaustive} if $F_\infty G \to G$ is an isomorphism, it
is \emph{Hausdorff} if $F_{-\infty} G = 0$, and \emph{complete} if
$RF_{-\infty} G = 0$.
\end{definition}

\begin{lemma}
For an exhaustive complete Hausdorff filtration we can recover
$G$ from the filtration subquotients $F_j G/F_i G$
for $i \le j$ in $\bZ$, by either one of the isomorphisms
$$
\colim_j \lim_i F_j G/F_i G
	\ \cong \ G
	\ \cong \ \lim_i \colim_j F_j G/F_i G \,.
$$
\end{lemma}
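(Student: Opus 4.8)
The plan is to show that both comparison maps are isomorphisms by reducing, in each direction, to the exactness properties of $\colim$ and the $\lim$--$\Rlim$ exact sequence for towers, and then feeding in the three hypotheses on the filtration. Throughout, limits, colimits and $\Rlim$ in $\cA$ are formed degreewise in the internal degree, filtered colimits are exact, and for a tower indexed (after reindexing) by the natural numbers only $\lim$ and $\Rlim = \limone$ can be nonzero, so no higher derived limits intervene.

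First I would fix the filtration degree $j$ and consider the short exact sequence of towers in the variable $i \le j$,
$$
0 \longto (F_i G)_i \longto (F_j G)_i \longto (F_j G/F_i G)_i \longto 0 \,,
$$
where the middle tower is constant. Its associated $\lim$--$\Rlim$ exact sequence reads
$$
0 \to \lim_i F_i G \to F_j G \to \lim_i F_j G/F_i G \to \Rlim_i F_i G \to 0 \,,
$$
because a constant tower has vanishing $\Rlim$. By the Hausdorff and completeness hypotheses, $\lim_i F_i G = F_{-\infty} G = 0$ and $\Rlim_i F_i G = RF_{-\infty} G = 0$, so the structure map $F_j G \to \lim_i F_j G/F_i G$ is an isomorphism. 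Passing to the colimit over $j$ and using exhaustiveness,
$$
\colim_j \lim_i F_j G/F_i G \ \cong \ \colim_j F_j G \ = \ F_\infty G \ \cong \ G \,,
$$
and at each stage this composite is assembled from the filtration maps $F_j G \to G$, so it is the canonical colim--lim comparison morphism.

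For the second isomorphism I would fix $i$ and use exactness of filtered colimits: applying $\colim_j$ to $0 \to F_i G \to F_j G \to F_j G/F_i G \to 0$ yields $0 \to F_i G \to F_\infty G \to \colim_j F_j G/F_i G \to 0$, hence $\colim_j F_j G/F_i G \cong G/F_i G$ by exhaustiveness. Now form the $\lim$--$\Rlim$ exact sequence of the tower of short exact sequences $0 \to F_i G \to G \to G/F_i G \to 0$, with $G$ constant in $i$:
$$
0 \to \lim_i F_i G \to G \to \lim_i G/F_i G \to \Rlim_i F_i G \to 0 \,.
$$
Again $\lim_i F_i G = 0$ and $\Rlim_i F_i G = 0$, so $G \to \lim_i G/F_i G \cong \lim_i \colim_j F_j G/F_i G$ is an isomorphism, visibly the canonical lim--colim comparison morphism.

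The only point requiring genuine care is the bookkeeping: one must verify that the two composite isomorphisms produced above really are the natural comparison morphisms, so that the statement concerns a single canonical map in each direction rather than an abstract isomorphism. I expect this naturality check to be the sole mild obstacle; everything else is a direct application of exactness of filtered colimits and of the $\lim$--$\Rlim$ sequence, the latter truncated at $\limone$ since the systems in $i$ are towers.
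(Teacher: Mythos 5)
Your proof is correct and follows essentially the same route as the paper: the paper's one-line proof invokes exactly the two isomorphisms $F_j G \cong \lim_i F_j G/F_i G$ (from Hausdorffness and completeness via the $\lim$--$\Rlim$ sequence) and $G/F_i G \cong \colim_j F_j G/F_i G$ (from exhaustiveness and exactness of filtered colimits), which you have simply spelled out in full. The extra care you take about the comparison maps being the canonical ones is a reasonable elaboration, not a departure.
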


\begin{proof}
This follows from the isomorphisms $F_j G \cong \lim_i F_j G/F_i G$
and $G / F_i G \cong \colim_j  F_j G/F_i G$, respectively.
\end{proof}

\begin{definition}[\cite{CE56}*{XV.2}, \cite{Boa99}*{5.2}]
A spectral sequence $(E^r, d^r)_r$ \emph{converges weakly} to a filtered
object~$G$ if the filtration is exhaustive and there are isomorphisms
$$
E^\infty_s \cong F_s G / F_{s-1} G
$$
for each integer~$s$.  The spectral sequence \emph{converges} if it
converges weakly and the filtration is Hausdorff.  It \emph{converges
strongly} if it converges and the filtration is complete.
\end{definition}

\begin{remark}
For a strongly convergent spectral sequence we can recover the target
$G$ from the $E^\infty$-term if we are able to resolve the extension
problems, i.e., to determine the diagram of filtration subquotients
$F_j G/F_i G$ from knowledge of the minimal subquotients $F_s G/F_{s-1}
G \cong E^\infty_s$ for $i < s \le j$ and other available information.
\end{remark}

\section{Exact couples}

\begin{definition}[\cite{Mas52}*{\S4}]
An \emph{exact couple} $(A, E, \alpha, \beta, \gamma)$ in $\cA$ is a pair
of graded objects $A = (A_s)_s$ and $E = (E_s)_s$, and three morphisms
$\alpha = (\alpha_s \: A_{s-1} \to A_s)_s$, $\beta = (\beta_s \: A_s
\to E_s)_s$, $\gamma = (\gamma_s \: E_s \to A_{s-1})_s$ of filtration
degree $+1$, $0$ and $-1$, respectively, such that the triangle
$$
\xymatrix{
A_{s-1} \ar[rr]^-{\alpha_s} && A_s \ar[dl]^-{\beta_s} \\
& E_s \ar[ul]^-{\gamma_s}
}
$$
is exact for each $s \in \bZ$.
We assume that the internal degree of $\alpha$ is $0$, and that the
internal degrees of $\beta$ and $\gamma$ are either $0$ and $-1$,
or $-1$ and $0$.
\end{definition}

\begin{remark}
Equivalently we require that each triangle in the diagram
$$
\xymatrix{
\dots \ar[r]
& A_{s-2} \ar[rr]^-{\alpha_{s-1}}
	&& A_{s-1} \ar[rr]^-{\alpha_s} \ar[dl]^-{\beta_{s-1}}
	&& A_s \ar[rr]^-{\alpha_{s+1}} \ar[dl]^-{\beta_s}
	&& A_{s+1} \ar[r] \ar[dl]^-{\beta_{s+1}}
	& \dots \\
&& E_{s-1} \ar[ul]^-{\gamma_{s-1}}
&& E_s \ar[ul]^-{\gamma_s}
&& E_{s+1} \ar[ul]^-{\gamma_{s+1}}
}
$$
is exact.  This object is called an \emph{unrolled}, or unraveled,
exact couple \cite{Boa99}*{\S0}.
\end{remark}

\begin{definition} \label{def:excoupleZrBrEr}
Let $(A, E, \alpha, \beta, \gamma)$ be an exact couple.
For integers $r\ge1$ and $s$ let
\begin{align*}
Z^r_s &= \gamma^{-1} \im(\alpha^{r-1} \: A_{s-r} \to A_{s-1}) \\
B^r_s &= \beta \ker(\alpha^{r-1} \: A_s \to A_{s+r-1}) \\
E^r_s &= Z^r_s / B^r_s \,.
\end{align*}
Let $d^r_s \: E^r_s \to E^r_{s-r}$ be given by $d^r_s([x]) = [\beta(y)]$
where $\gamma(x) = \alpha^{r-1}(y)$.
\end{definition}

\begin{lemma}
$\ker(d^r_s) = Z^{r+1}_s/B^r_s$ and $\im(d^r_{s+r}) = B^{r+1}_s/B^r_s$,
so $H_s(E^r, d^r) \cong E^{r+1}_s$ and
$(E^r, d^r)_r$ is a spectral sequence.
\qed
\end{lemma}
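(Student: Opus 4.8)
The plan is to carry out the standard \emph{derived exact couple} diagram chase, working throughout in the unrolled exact couple and repeatedly invoking exactness at the three types of vertices, namely $\im\gamma = \ker\alpha$, $\im\alpha = \ker\beta$ and $\im\beta = \ker\gamma$, together with the immediate consequences $\alpha\gamma = 0$, $\beta\alpha = 0$, $\gamma\beta = 0$ and the factorization $\alpha^r = \alpha \circ \alpha^{r-1}$.

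As a preliminary step I would record the nesting
$$
B^r_s \subseteq B^{r+1}_s \subseteq Z^{r+1}_s \subseteq Z^r_s \,,
$$
which holds because $\ker(\alpha^{r-1}) \subseteq \ker(\alpha^r)$ gives the first inclusion, because $\gamma\beta = 0$ puts all of $\beta\ker(\alpha^r)$ into $\gamma^{-1}(0) \subseteq Z^{r+1}_s$, and because $\im(\alpha^r) \subseteq \im(\alpha^{r-1})$ inside $A_{s-1}$ gives the last; in particular every quotient appearing in the statement is defined. I would also verify that $d^r_s$ is well defined: for $x \in Z^r_s$ with a lift $\gamma(x) = \alpha^{r-1}(y)$ one has $\gamma\beta(y) = 0$, so $\beta(y) \in Z^r_{s-r}$; two lifts $y$ differ by an element of $\ker(\alpha^{r-1})$, changing $\beta(y)$ by an element of $B^r_{s-r}$; and replacing $x$ by $x + b$ with $b \in B^r_s \subseteq \im\beta = \ker\gamma$ leaves $\gamma(x)$, hence the lift, unchanged.

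For the kernel, $[x] \in \ker(d^r_s)$ means $\beta(y) \in B^r_{s-r} = \beta\ker(\alpha^{r-1})$, so $y = y' + \alpha(w)$ with $\alpha^{r-1}(y') = 0$, using $\ker\beta = \im\alpha$; then $\gamma(x) = \alpha^{r-1}(y) = \alpha^r(w) \in \im(\alpha^r)$, i.e.\ $x \in Z^{r+1}_s$. Conversely, if $\gamma(x) = \alpha^r(w)$ one may use the lift $y = \alpha(w)$, and $\beta\alpha = 0$ forces $d^r_s([x]) = 0$; hence $\ker(d^r_s) = Z^{r+1}_s/B^r_s$. For the image, a class $[\beta(y)] \in E^r_s$ with $y \in A_s$ lies in $\im(d^r_{s+r})$ exactly when $\alpha^{r-1}(y)$ lifts along $\gamma$, i.e.\ when $\alpha^{r-1}(y) \in \im(\gamma) = \ker(\alpha)$, i.e.\ when $\alpha^r(y) = 0$; so $\im(d^r_{s+r}) = \beta\ker(\alpha^r)/B^r_s = B^{r+1}_s/B^r_s$. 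Combining these with the nesting above yields
$$
H_s(E^r, d^r) = (Z^{r+1}_s/B^r_s) \big/ (B^{r+1}_s/B^r_s) \cong Z^{r+1}_s/B^{r+1}_s = E^{r+1}_s \,.
$$

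Finally, $d^r \circ d^r = 0$ is immediate: the lift needed to compute $d^r_{s-r}\bigl([\beta(y)]\bigr)$ may be taken to be $0$, since $\gamma\beta(y) = 0$. The filtration degree of $d^r_s \colon E^r_s \to E^r_{s-r}$ is $-r$ by construction, and its internal degree is the sum of those of $\beta$ and $\gamma$, which is $-1$; together with the isomorphisms just produced this shows $(E^r, d^r)_r$ is a spectral sequence. I do not anticipate a genuine obstacle here; the only points demanding care are tracking which module each element inhabits, and confirming that the four subobjects $B^r_s \subseteq B^{r+1}_s \subseteq Z^{r+1}_s \subseteq Z^r_s$ are nested exactly as claimed so that the subquotient identifications are literally valid.
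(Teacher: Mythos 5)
Your proof is correct and is precisely the standard derived--exact--couple diagram chase that the paper omits (the lemma is stated with a \qed and no written proof, the argument being considered routine). All the key points are handled properly: the nesting $B^r_s \subseteq B^{r+1}_s \subseteq Z^{r+1}_s \subseteq Z^r_s$, well-definedness of $d^r_s$, the identifications of kernel and image via $\ker\beta = \im\alpha$ and $\im\gamma = \ker\alpha$, and the Noether isomorphism giving $H_s(E^r,d^r) \cong E^{r+1}_s$.
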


\begin{remark}
The objects $Z^r_s$ and $B^r_s$ of Definition~\ref{def:excoupleZrBrEr} agree
with those associated in~Lemma~\ref{lem:ascdesc} to the spectral
sequence $(E^r, d^r)_r$.
\end{remark}

\begin{definition}
Given a sequence $\dots \to A_{s-1} \overset{\alpha_s}\longto A_s
\to \dots$, consider the colimit $A_\infty = \colim_s A_s$, the limit
$A_{-\infty} = \lim_s A_s$, and the derived limit $RA_{-\infty} = \Rlim_s
A_s$.  Let $\iota_s \: A_s \to A_\infty$ and $\pi_s \: A_{-\infty} \to
A_s$ be the colimit and limit structure maps, respectively.  The colimit
$A_\infty$ is filtered, for $s \in \bZ$, by
$$
F_s A_\infty = \im(\iota_s \: A_s \to A_\infty) \,.
$$
The limit $A_{-\infty}$ is filtered, also for $s \in \bZ$, by
$$
F_s A_{-\infty} = \ker(\pi_s \: A_{-\infty} \to A_s) \,.
$$
\end{definition}

\begin{definition}[\cite{Boa99}*{5.10}]
An exact couple $(A, E, \alpha, \beta, \gamma)$ \emph{converges
conditionally to the colimit} if $A_{-\infty} = 0$ and $RA_{-\infty}
= 0$.  It \emph{converges conditionally to the limit} if $A_\infty = 0$.
\end{definition}

\begin{remark}
One often says that a spectral sequence is conditionally convergent, but
conditional convergence is, strictly speaking, a property of an exact
couple.
\end{remark}

The following two variants of Boardman's results show that conditional
convergence, when combined with the vanishing of $RE^\infty$, suffices
to give quite good convergence results.  In the first case the target is
correct, but we may not get strong convergence because the filtration
(with limit $F_{-\infty} A_\infty$) might not be Hausdorff.  In the
second case we get strong convergence, but the target is sometimes only
a quotient (by $RA_{-\infty}$) of the most desirable target object.
In both cases the error term is given by the whole-plane obstruction
object $W$ for the exact couple, whose definition is reviewed directly
after the two theorems.

\begin{theorem}[cf.~\cite{Boa99}*{8.10}] \label{thm:GH2.2.5}
Let $(A, E, \alpha, \beta, \gamma)$ be an exact couple that converges
conditionally to the colimit, and assume that $RE^\infty
= 0$.  Then the associated spectral sequence converges weakly to
$A_\infty$, and the filtration of $A_\infty$ is complete.
Moreover, $F_{-\infty} A_\infty \cong W$.
\end{theorem}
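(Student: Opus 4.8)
The plan is to work throughout with the colimit $A_\infty = \colim_s A_s$ and its filtration $F_s A_\infty = \im(\iota_s \colon A_s \to A_\infty)$, which is exhaustive by construction. Since $A_\infty$ is the colimit of a $\bZ$-indexed sequence, $\ker(\iota_s) = \colim_r \ker(\alpha^r \colon A_s \to A_{s+r}) =: C_s$, so $F_s A_\infty \cong A_s/C_s$, and the maps $\alpha_s$ organize the short exact sequences $0 \to C_s \to A_s \to F_s A_\infty \to 0$ into a short exact sequence of $\bZ$-indexed direct systems. Viewing such a system $(X_s)_s$ as the inverse system $(X_{-n})_{n \ge 0}$, which has the same limit and the same derived limits, and feeding conditional convergence to the colimit — namely $\lim_s A_s = A_{-\infty} = 0$ and $\Rlim_s A_s = RA_{-\infty} = 0$ — into the resulting six-term $\lim$-$\Rlim$ exact sequence, I obtain at once
\[
\lim_s C_s = 0, \qquad F_{-\infty} A_\infty = \lim_s F_s A_\infty \cong \Rlim_s C_s, \qquad \Rlim_s F_s A_\infty = 0 .
\]
The last equation is precisely completeness of the filtration of $A_\infty$, and it uses conditional convergence only; the middle one exhibits $F_{-\infty} A_\infty$ as $\Rlim_s \colim_r \ker(\alpha^r \colon A_s \to A_{s+r})$, which I would then match with Boardman's $W$ by unwinding the definition reviewed below — using that $C_s = \ker(A_s \to A_\infty)$ and that $K_\infty \im^r A_s = \im(\alpha^r \colon A_{s-r} \to A_s) \cap C_s = \alpha^r(C_{s-r})$, so that the tower in $r$ of these subobjects of $C_s$ is exactly what the $\colim_s \Rlim_r$ in the formula for $W$ is assembled from.

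It remains to prove weak convergence, i.e.\ that $E^\infty_s \cong F_s A_\infty/F_{s-1} A_\infty$. Put $I^r_s = \im(\alpha^r \colon A_{s-r} \to A_s)$ and $I_s = \bigcap_r I^r_s$. A diagram chase from the exactness of the couple yields $Z^\infty_s = \gamma_s^{-1}(I_{s-1})$, $B^\infty_s = \beta_s(C_s) \subseteq \beta_s(A_s) = \ker(\gamma_s)$, and $\gamma_s(Z^\infty_s) = \ker(\alpha_s) \cap I_{s-1}$; since $\ker(\gamma_s)/B^\infty_s \cong A_s/(C_s + \im\alpha_s) \cong F_s A_\infty/F_{s-1} A_\infty$, these combine into a short exact sequence
\[
0 \longto F_s A_\infty/F_{s-1} A_\infty \longto E^\infty_s \longto \ker(\alpha_s) \cap I_{s-1} \longto 0 ,
\]
so it suffices to show $I_s = 0$ for every $s$. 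From the tower in $r$ of short exact sequences $0 \to \ker(\alpha^r \colon A_{t-r} \to A_t) \to A_{t-r} \to I^r_t \to 0$, conditional convergence gives $\Rlim_r I^r_t = 0$ for every $t$. From the tower in $r$ of short exact sequences $0 \to \ker(\alpha_s) \cap I^{r-1}_{s-1} \to I^{r-1}_{s-1} \to I^r_s \to 0$, whose last map is induced by $\alpha_s$, together with the vanishings $\Rlim_r I^r_{s-1} = \Rlim_r I^r_s = 0$ just obtained and with $RE^\infty_s \cong \Rlim_r (\ker(\alpha_s) \cap I^{r-1}_{s-1}) = 0$ — the latter read off from the tower $0 \to \ker(\gamma_s) \to Z^r_s \to \ker(\alpha_s) \cap I^{r-1}_{s-1} \to 0$ and the identity $RE^\infty_s = \Rlim_r Z^r_s$ — I conclude that $\alpha_s \colon I_{s-1} \to I_s$ is surjective. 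Hence $(I_{-n})_{n \ge 0}$ is an inverse system with surjective transition maps whose limit $\lim_s I_s$ embeds into $\lim_s A_s = 0$; an inverse system of surjections with vanishing limit has every term zero, so $I_s = 0$ for all $s$, and the displayed short exact sequence collapses to the desired isomorphism.

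The crux is the second paragraph. Producing the short exact sequence around $E^\infty_s$ is a somewhat delicate diagram chase with the infinite cycles and boundaries, and then one must genuinely use \emph{both} hypotheses in tandem — conditional convergence to annihilate the derived limits of the image towers, and $RE^\infty = 0$ to force $\alpha_s$ surjective on the stable images — in order to kill the correction term $\ker(\alpha_s) \cap I_{s-1}$. Completeness, by contrast, is a formal consequence of conditional convergence alone; and once the review of Boardman's $W$ is in place, the identification $F_{-\infty} A_\infty \cong W$ is essentially bookkeeping of the same colimit-limit interchange flavour that pervades the rest of the paper.
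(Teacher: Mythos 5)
Your treatment of exhaustiveness, completeness and weak convergence is correct and is in substance the paper's own argument: your short exact sequence around $E^\infty_s$ is Lemma~\ref{lem:Boa5.6}(a) (with $Z^\infty_s/\ker(\gamma)$ identified via $\gamma$ with $\ker(\alpha_s)\cap I_{s-1}$), your two towers in $r$ reproduce the six-term sequence of Lemma~\ref{lem:Boa5.6}(b) and its consequence (c), and deducing $I_s=Q_s=0$ from surjectivity of $\alpha\:Q_{s-1}\to Q_s$ together with $\lim_s Q_s\subseteq A_{-\infty}=0$ is exactly the route through Lemma~\ref{lem:Boa5.4b}. Your direct observation that $RQ_t=\Rlim_r \im^r A_t$ vanishes as a quotient of $RA_{-\infty}$ is a small simplification of the paper's detour through the Mittag--Leffler sequence.

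The gap is the ``Moreover'' clause. The isomorphism $F_{-\infty}A_\infty\cong\Rlim_s C_s$, with $C_s=\ker(\iota_s)$, is correct and uses only conditional convergence; but identifying $\Rlim_s C_s$ with $W=\colim_s\Rlim_r K_\infty\im^r A_s$ is not ``bookkeeping'', and you give no argument for it. The two expressions are derived limits over different indices of different towers, and the discrepancy is measured by a genuinely nontrivial term: for fixed $s$ the surjections $\alpha^r\:C_{s-r}\onto\alpha^r(C_{s-r})=K_\infty\im^r A_s$ form a map of towers over $r$ whose kernel tower is $\ker(\alpha^r\:A_{s-r}\to A_s)$, and the resulting six-term sequence (using $\lim_r C_{s-r}=0$) yields, after applying $\colim_s$, only a natural surjection $\Rlim_s C_s\onto W$ whose kernel is governed by $\colim_s\Rlim_r\ker(\alpha^r\:A_{s-r}\to A_s)$. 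Killing that kernel is precisely where the hypothesis $RE^\infty=0$ must enter, yet your first paragraph never invokes it nor the outputs $Q_s=0$, $RQ_s=0$ of your second paragraph. The paper closes this by a different decomposition: the short exact sequences $0\to K_\infty\im^r A_s\to\im^r A_s\to F_{s-r}A_\infty\to 0$ give, upon passage to limits over $r$, the exact sequence $Q_s\to F_{-\infty}A_\infty\to\Rlim_r K_\infty\im^r A_s\to RQ_s$, and substituting $Q_s=0$ and $RQ_s=0$ identifies $F_{-\infty}A_\infty$ with $\Rlim_r K_\infty\im^r A_s$ for every $s$, hence with $W$ as the colimit of a constant system. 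You have all the needed inputs; rerouting the last step through these sequences would repair the proof, but as written the isomorphism $F_{-\infty}A_\infty\cong W$ is asserted rather than proved.
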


\begin{theorem}[cf.~\cite{Boa99}*{8.13}] \label{thm:GH2.2.4}
Let $(A, E, \alpha, \beta, \gamma)$ be an exact couple that converges
conditionally to the limit, and assume that $RE^\infty
= 0$.  Then the associated spectral sequence converges strongly to
$A_{-\infty}$.  Moreover, $RA_{-\infty} \cong W$.
\end{theorem}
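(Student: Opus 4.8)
The plan is to run the same analysis as in the proof of Theorem~\ref{thm:GH2.2.5} (the colimit case), but applied to the exact couple that converges conditionally \emph{to the limit}, and then to track where the obstruction term lands. First I would recall that conditional convergence to the limit means $A_\infty = \colim_s A_s = 0$, so $F_s A_\infty = 0$ for all $s$; in particular the images $\im(\iota_s\colon A_s \to A_\infty)$ vanish, which says that every element of each $A_s$ is killed by a finite iterate of $\alpha$. Dually to the colimit situation, I would introduce the limit $A_{-\infty} = \lim_s A_s$ together with its filtration $F_s A_{-\infty} = \ker(\pi_s\colon A_{-\infty}\to A_s)$, and show that this filtration is exhaustive and Hausdorff, i.e.\ $F_\infty A_{-\infty} = A_{-\infty}$ and $F_{-\infty} A_{-\infty} = 0$. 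The key computational input is that, under $A_\infty=0$, the sequence of images $\im(\alpha^r\colon A_{s-r}\to A_s)$ stabilizes in the Mittag-Leffler sense precisely to the pieces that control $Z^\infty_s$, so that $E^\infty_s \cong F_s A_{-\infty}/F_{s-1} A_{-\infty}$; this gives weak convergence.

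Next I would establish completeness, i.e.\ $RF_{-\infty} A_{-\infty} = \Rlim_s F_s A_{-\infty} = 0$, using the hypothesis $RE^\infty = 0$. The idea is the standard one from Boardman: one builds short exact sequences relating the towers $(Z^r_s)_r$, $(F_s A_{-\infty})$, and the $E^r$-towers, and feeds them into the six-term $\lim$--$\Rlim$ exact sequence; the vanishing of $RE^\infty_s = \Rlim_r Z^r_s$ propagates to the vanishing of the relevant derived limit over $s$. Combining exhaustive, Hausdorff, complete with the weak convergence established above yields strong convergence of $(E^r,d^r)_r$ to $A_{-\infty}$ with its filtration, which is the first assertion.

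For the final assertion $RA_{-\infty}\cong W$, I would compare the ``naive'' target $A_{-\infty}=\lim_s A_s$ with what the spectral sequence actually reconstructs. Because $A_\infty = 0$, each subquotient $A_s/A_{s-r}$ can be rewritten, and passing to limits one finds that the filtration data $\{F_j A_{-\infty}/F_i A_{-\infty}\}$ that the $E^\infty$-term assembles corresponds not to $A_{-\infty}$ on the nose but to a quotient of it, the discrepancy being measured by a derived limit. Concretely I expect the identity
\[
W = \colim_s \Rlim_r K_\infty\,\im^r A_s \;\cong\; \Rlim_s A_s = RA_{-\infty}
\]
to drop out by interchanging the colimit in $s$ with the derived limit in $r$, after observing that $A_\infty=0$ forces $K_\infty \im^r A_s$ (the kernel of the iterated $\alpha$ into the limit direction) to coincide with $A_s$ itself in the relevant range, collapsing the double limit to a single $\Rlim_s$. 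The main obstacle is precisely this last interchange: justifying that $\colim_s$ commutes with $\Rlim_r$ here is not formal, and one must use conditional convergence to the limit together with a Mittag-Leffler / short-exact-sequence argument (exactly as Boardman does in~\cite{Boa99}*{\S8}) to show the higher colimit-derived-functor terms vanish. Once that interchange is in hand, identifying the surviving term with $RA_{-\infty}$ is a direct unwinding of definitions.
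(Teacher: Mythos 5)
Your overall strategy follows the same Boardman-style route as the paper, but two steps go wrong as written. First, the hypotheses are misallocated: completeness (and Hausdorffness) of the filtration $F_s A_{-\infty} = \ker(\pi_s)$ is automatic for any exact couple (the paper's Lemma~\ref{lem:Boa5.4b}), whereas $RE^\infty = 0$ is what you need for \emph{weak} convergence. With $A_\infty = 0$ alone, Lemma~\ref{lem:Boa5.6} only gives $E^\infty_s \cong Z^\infty_s/\ker(\gamma) \cong \ker(\alpha \: Q_{s-1} \to Q_s)$; to identify this with $F_s A_{-\infty}/F_{s-1} A_{-\infty}$ you must know that $\im(\pi_s) = Q_s$, i.e.\ that $\lim_s Q_s \to Q_s$ is surjective, and that is exactly what $RE^\infty = 0$ buys (via surjectivity of $\alpha \: Q_{s-1} \to Q_s$). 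Your sketch claims weak convergence from $A_\infty = 0$ alone and reserves $RE^\infty = 0$ for completeness, which inverts the actual logic; the images $\im^r A_s$ need not stabilize in any Mittag--Leffler sense.

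Second, and more seriously, your identification of $W$ is incorrect as stated: $A_\infty = 0$ gives $\ker(\iota_s) = A_s$ and hence $K_\infty \im^r A_s = \im^r A_s$, \emph{not} $A_s$ itself. (If it were the constant tower $A_s$, then $\Rlim_r$ would vanish and $W$ would be zero.) The correct conclusion is $\Rlim_r K_\infty \im^r A_s = RQ_s$ and $W = \colim_s RQ_s$. The remaining step, which you correctly flag as the main obstacle but do not carry out, is not an interchange of $\colim_s$ with $\Rlim_r$: it is the Mittag--Leffler short exact sequence $0 \to \Rlim_s Q_s \to RA_{-\infty} \to \lim_s RQ_s \to 0$, in which $\Rlim_s Q_s = 0$ because the maps $\alpha \: Q_{s-1} \to Q_s$ are surjective (again from $RE^\infty = 0$), combined with the isomorphisms $\alpha \: RQ_{s-1} \cong RQ_s$ from Lemma~\ref{lem:Boa5.6}(b), which make the system $(RQ_s)_s$ constant so that $\lim_s RQ_s \cong RQ_s \cong \colim_s RQ_s = W$. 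Without these two inputs the final isomorphism $RA_{-\infty} \cong W$ does not follow from what you have written.
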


\begin{definition}[\cite{Boa99}*{\S3, \S8}]
\label{def:Q-RQ-W}
Let $(A, E, \alpha, \beta, \gamma)$ be any exact couple.
For integers $r\ge1$ and $s$ let
\begin{align*}
\im^r A_s &= \im(\alpha^r \: A_{s-r} \to A_s) \\
Q_s &= \lim_r \im^r A_s \\
RQ_s &= \Rlim_r \im^r A_s \\
K_\infty \im^r A_s &= \ker(\iota_s \: A_s \to A_\infty) \cap \im^r A_s \\
W &= \colim_s \Rlim_r K_\infty \im^r A_s \,.
\end{align*}
\end{definition}

\begin{remark}
Boardman employs transfinite induction to define image subsequences
$\im^\sigma A_s$ for arbitrary ordinals $\sigma$, and uses these to
prove that the sufficient conditions he gives for strong convergence are
also necessary.  We will only establish the sufficiency of our conditions,
and for these results there is no need to invoke transfinite induction.
\end{remark}

\begin{lemma}[cf.~\cite{Boa99}*{5.4(b)}]
\label{lem:Boa5.4b}
The filtration of
$A_{-\infty} = \lim_s \im(\pi_s) = \lim_s Q_s$ is complete
and Hausdorff.
\end{lemma}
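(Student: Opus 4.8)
The plan is to reduce the whole statement to standard facts about inverse limits and their derived functors $\Rlim$ for towers, exploiting that $A_{-\infty}$ is by construction such a limit. Since the cone relations give $\alpha_s \circ \pi_{s-1} = \pi_s$, the subobjects $F_s A_{-\infty} = \ker(\pi_s)$ form a family that decreases as $s$ decreases, so $F_{-\infty} A_{-\infty}$ and $RF_{-\infty} A_{-\infty}$ are computed by the ordinary tower $(\ker(\pi_{-t}))_{t \ge 0}$ obtained by letting $s \to -\infty$. The only point needing care in this reduction is the routine observation that restricting the biinfinite index set to its negative part is cofinal, so that $\lim_s$ and $\Rlim_s$ really do become $\lim$ and $\Rlim$ of a tower.

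Next I would establish the identity $A_{-\infty} = \lim_s \im(\pi_s) = \lim_s Q_s$. From $\pi_s = \alpha^r \circ \pi_{s-r}$ one gets $\im(\pi_s) \subseteq \im^r A_s$ for every $r \ge 1$, hence $\im(\pi_s) \subseteq Q_s$; together with $\alpha_s(Q_{s-1}) \subseteq Q_s$ this makes $\im(\pi_s) \subseteq Q_s \subseteq A_s$ a nested pair of subsequences. Applying $\lim_s$ gives inclusions $\lim_s \im(\pi_s) \subseteq \lim_s Q_s \subseteq \lim_s A_s = A_{-\infty}$. For the reverse direction, the corestrictions $A_{-\infty} \onto \im(\pi_s)$ assemble into a morphism $A_{-\infty} \longto \lim_s \im(\pi_s)$ whose composite with the monomorphism $\lim_s \im(\pi_s) \hookrightarrow \lim_s A_s = A_{-\infty}$ is $\id_{A_{-\infty}}$; hence both maps are isomorphisms, and squeezing yields $\lim_s \im(\pi_s) = \lim_s Q_s = A_{-\infty}$.

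The Hausdorff property is then immediate, since
$$
F_{-\infty} A_{-\infty} = \lim_s \ker(\pi_s) = \bigcap_s \ker(\pi_s) = \ker\Bigl(A_{-\infty} \longto \prod_s A_s\Bigr) = 0 \,,
$$
the last map being the monomorphism $(\pi_s)_s$ that exhibits $A_{-\infty} = \lim_s A_s$ as a subobject of $\prod_s A_s$. For completeness I would apply the six-term $\lim$–$\Rlim$ exact sequence — available because graded $R$-modules have exact countable products — to the short exact sequence of towers (in $t \ge 0$)
$$
0 \longto \ker(\pi_{-t}) \longto A_{-\infty} \longto \im(\pi_{-t}) \longto 0 \,,
$$
whose middle term is the constant tower. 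Since $\Rlim$ of a constant tower vanishes and $\lim$ of a constant tower is the module itself, the exact sequence reduces to
$$
0 \longto \lim_t \ker(\pi_{-t}) \longto A_{-\infty} \longto \lim_t \im(\pi_{-t}) \longto \Rlim_t \ker(\pi_{-t}) \longto 0 \,;
$$
here the first term is $0$ by the Hausdorff computation and the middle arrow is the isomorphism $A_{-\infty} \xrightarrow{\cong} \lim_t \im(\pi_{-t})$ found above, so $RF_{-\infty} A_{-\infty} = \Rlim_t \ker(\pi_{-t})$, being the cokernel of that isomorphism, vanishes.

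All of this is formal once the standard inputs are in place: cofinality for $\lim$ and $\Rlim$, vanishing of $\Rlim$ of a constant tower, and the six-term exact sequence. I expect the only genuine — and still minor — obstacle to be the bookkeeping that recasts the biinfinite filtration degree as an honest tower, together with checking that the arrow $A_{-\infty} \longto \lim_t \im(\pi_{-t})$ appearing in the six-term sequence is indeed the canonical comparison map identified earlier, so that its cokernel is the relevant derived limit; once that identification is made, the vanishing of the obstruction drops out.
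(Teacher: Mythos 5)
Your proposal is correct and follows essentially the same route as the paper: identify $\lim_s\im(\pi_s)=\lim_s Q_s=\lim_s A_s$ via the factorization of the identity of $A_{-\infty}$ through the nested subobjects, then pass the short exact sequences $0\to F_sA_{-\infty}\to A_{-\infty}\to\im(\pi_s)\to 0$ to (derived) limits to read off both $F_{-\infty}A_{-\infty}=0$ and $RF_{-\infty}A_{-\infty}=0$. Your separate intersection-of-kernels argument for the Hausdorff property and the explicit cofinality bookkeeping are harmless extras already subsumed in that exact sequence.
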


\begin{proof}
The colimit structure map $\pi_s \: A_{-\infty} \to A_s$ factors through
$\im(\pi_s) \subset \im^r A_s \subset A_s$ for each $r$, hence also
through $\im(\pi_s) \subset Q_s \subset A_s$.  Therefore the identity
map of $A_{-\infty}$ factors through $\lim_s \im(\pi_s) \subset \lim_s
Q_s \subset \lim_s A_s$, which implies that $\lim_s
\im(\pi_s) = \lim_s Q_s = \lim_s A_s$.  The short exact sequences
$0 \to F_s A_{-\infty} \to A_{-\infty} \to \im(\pi_s) \to 0$ give
an exact sequence
$$
0 \to F_{-\infty} A_{-\infty} \longto A_{-\infty}
\overset{\cong}\longto \lim_s \im(\pi_s)
\longto RF_{-\infty} A_{-\infty} \longto 0
$$
upon passage to limits.
Hence $F_{-\infty} A_{-\infty} = 0$ and $RF_{-\infty} A_{-\infty} = 0$.
\end{proof}

\begin{lemma} \label{lem:ACB}
Consider any two morphisms
$\xymatrix{
A & C \ar[l]_-f \ar[r]^-g & B
}$
in $\cA$.  There is an isomorphism
$$
\im(f)/f(\ker(g))
	\overset{\cong}\longto \im(g)/(g(\ker(f))
$$
given by $[a] \mapsto [g(c)]$, where $f(c) = a$.
\end{lemma}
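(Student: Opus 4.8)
The plan is to identify both sides with a single quotient of $C$, namely $C/(\ker(f)+\ker(g))$, and then read off the asserted formula.

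First I would factor $f$ as $C \onto C/\ker(f) \overset{\cong}\longto \im(f)$, the second map being the monomorphism $\bar f$ induced by $f$. Under the isomorphism $\bar f \colon C/\ker(f) \overset{\cong}\longto \im(f)$ the subobject $f(\ker(g)) \subseteq \im(f)$ corresponds to the image of $\ker(g)$ in $C/\ker(f)$, namely $(\ker(f)+\ker(g))/\ker(f)$. Passing to the evident further quotients, $\bar f$ induces an isomorphism $\varphi_f \colon C/(\ker(f)+\ker(g)) \overset{\cong}\longto \im(f)/f(\ker(g))$ sending $[c]$ to $[f(c)]$.

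The same argument with $f$ and $g$ (and $A$ and $B$) interchanged yields an isomorphism $\varphi_g \colon C/(\ker(f)+\ker(g)) \overset{\cong}\longto \im(g)/g(\ker(f))$ sending $[c]$ to $[g(c)]$. The composite $\varphi_g \circ \varphi_f^{-1}$ is then the required isomorphism: given $a \in \im(f)$, any $c$ with $f(c)=a$ satisfies $\varphi_f^{-1}[a] = [c]$, whence $\varphi_g \varphi_f^{-1}[a] = [g(c)]$, matching the stated formula; in particular this shows the formula is well defined.

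I expect no real obstacle here. The only point needing a moment's care is the identification, under the coimage isomorphism $C/\ker(f) \cong \im(f)$, of $f(\ker(g))$ with $(\ker(f)+\ker(g))/\ker(f)$ (and symmetrically for $g$), which one may check on elements since $\cA$ is a category of graded modules, though it holds in any abelian category. As an alternative one can avoid the factorization and argue directly: check that $[a] \mapsto [g(c)]$ (for $f(c)=a$) is independent of the choice of $c$ and is killed on $f(\ker(g))$, and that the symmetric rule $[b] \mapsto [f(c)]$ (for $g(c)=b$) provides a two-sided inverse — the composite sends $[a]$, via $f(c)=a$ and then $g(c')=g(c)$, to $[f(c')]=[f(c)]=[a]$ because $c-c' \in \ker(g)$ — and conclude.
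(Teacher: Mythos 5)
Your proof is correct and takes essentially the same route as the paper, whose entire proof is the one-line observation that the isomorphism factors through $C/(\ker(f)+\ker(g))$; you have simply spelled out the details of that factorization and verified the formula $[a] \mapsto [g(c)]$.
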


\begin{proof}
The isomorphism factors through $C/(\ker(f)+\ker(g))$.
\end{proof}

\begin{lemma}[cf.~\cite{Boa99}*{5.6}]
\label{lem:Boa5.6}
(a) There is a natural short exact sequence
$$
0 \to F_s A_\infty/F_{s-1} A_\infty
	\longto E^\infty_s \longto Z^\infty_s/\ker(\gamma) \to 0 \,.
$$
(b) There is a natural six term exact sequence
$$
0 \to Z^\infty_s/\ker(\gamma)
        \overset{\gamma}\longto Q_{s-1}
        \overset{\alpha}\longto Q_s
        \to RE^\infty_s
        \overset{\gamma}\longto RQ_{s-1}
        \overset{\alpha}\longto RQ_s \to 0 \,.
$$
(c)
If $RE^\infty = 0$ then $\lim_s Q_s \to Q_s$ is surjective.
\end{lemma}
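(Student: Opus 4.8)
The plan is to handle the three parts in sequence, deriving (b) as a splice of two six term $\lim$--$\Rlim$ exact sequences and then reading off (c) from (b). For (a) I would first rewrite the infinite cycles and boundaries inside the exact couple. Since colimits commute with images, $B^\infty_s=\colim_r\beta\ker(\alpha^{r-1}\colon A_s\to A_{s+r-1})=\beta(K)$, where $K:=\colim_r\ker(\alpha^{r-1}\colon A_s\to A_{s+r-1})=\ker(\iota_s\colon A_s\to A_\infty)$. Because $\ker\gamma_s\subseteq\gamma_s^{-1}(\im\alpha^{r-1})=Z^r_s$ for every $r$, one gets $\ker\gamma_s\subseteq Z^\infty_s$, while $B^\infty_s=\beta(K)\subseteq\im\beta_s=\ker\gamma_s$; the chain $B^\infty_s\subseteq\ker\gamma_s\subseteq Z^\infty_s$ then yields the short exact sequence
\[
0\to\ker\gamma_s/B^\infty_s\longto E^\infty_s\longto Z^\infty_s/\ker\gamma_s\to 0 .
\]
It remains to identify the left-hand term with $F_s A_\infty/F_{s-1}A_\infty$. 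For this I would use $\iota_{s-1}=\iota_s\alpha_s$ to get $F_{s-1}A_\infty=\iota_s(\im\alpha_s)$, hence $F_s A_\infty/F_{s-1}A_\infty\cong A_s/(\im\alpha_s+K)=A_s/(\ker\beta_s+K)$, and then observe that the isomorphism $A_s/\ker\beta_s\overset{\cong}{\longto}\ker\gamma_s$ induced by $\beta_s$ carries $(\ker\beta_s+K)/\ker\beta_s=\beta(K)=B^\infty_s$ onto $B^\infty_s\subseteq\ker\gamma_s$. Naturality is automatic since every construction is functorial in the exact couple.

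For (b), the idea is to build two short exact sequences of towers indexed by $r$ and splice their six term $\lim$--$\Rlim$ sequences (using that countable products are exact in $\cA$). Exactness of the couple ($\im\gamma_s=\ker\alpha_s$, $\im\alpha_s=\ker\beta_s$) gives, for each $r\ge1$, that $\gamma_s$ restricts to an epimorphism $Z^r_s\onto\im^{r-1}A_{s-1}\cap\ker\alpha_s$ with kernel $\ker\gamma_s$, and that $\alpha_s$ restricts to an epimorphism $\im^{r-1}A_{s-1}\onto\im^r A_s$ with kernel $\im^{r-1}A_{s-1}\cap\ker\alpha_s$. These assemble into short exact sequences of towers
\[
0\to(\ker\gamma_s)_r\to(Z^r_s)_r\to(\im^{r-1}A_{s-1}\cap\ker\alpha_s)_r\to 0
\]
and
\[
0\to(\im^{r-1}A_{s-1}\cap\ker\alpha_s)_r\to(\im^{r-1}A_{s-1})_r\to(\im^r A_s)_r\to 0 .
\]
Since $(\ker\gamma_s)_r$ is constant, $\Rlim_r(\ker\gamma_s)=0$, so the first sequence gives $\lim_r(\im^{r-1}A_{s-1}\cap\ker\alpha_s)\cong Z^\infty_s/\ker\gamma_s$ and $RE^\infty_s\cong\Rlim_r(\im^{r-1}A_{s-1}\cap\ker\alpha_s)$, both compatibly with the inclusion into $A_{s-1}$, i.e.\ with $\gamma$. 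A cofinal reindexing identifies $\lim_r(\im^{r-1}A_{s-1})_r=Q_{s-1}$, $\Rlim_r(\im^{r-1}A_{s-1})_r=RQ_{s-1}$, $\lim_r(\im^r A_s)_r=Q_s$ and $\Rlim_r(\im^r A_s)_r=RQ_s$; feeding these and the identifications from the first sequence into the six term sequence of the second produces exactly the asserted sequence with the displayed maps $\gamma$ and $\alpha$, and naturality is again automatic.

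For (c), part (b) shows that $RE^\infty_s=0$ makes $\alpha\colon Q_{s-1}\to Q_s$ an epimorphism, for every $s$. Hence $(Q_s)_s$ is, cofinally as $s\to-\infty$, a tower of epimorphisms, and the limit of such a tower surjects onto each of its terms; since $\lim_s Q_s\to Q_s$ factors as $\lim_s Q_s\to Q_{s'}\to Q_s$ for $s'\le s$, with the first map surjective (limit of a tower of epimorphisms) and the second a composite of epimorphisms, it is surjective.

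The hard part will be (b): one must keep the two short exact sequences of towers and their connecting homomorphisms straight, verify that the reindexings leave $\lim_r$ and $\Rlim_r$ unchanged, and check that after splicing the connecting maps really are those induced by $\gamma$ and $\alpha$. Parts (a) and (c) are then short — (a) a diagram chase inside the exact couple, and (c) an immediate consequence of (b) together with the Mittag--Leffler property of towers of epimorphisms.
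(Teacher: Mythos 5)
Your proposal is correct and follows essentially the paper's own route: part (a) is the identification $F_sA_\infty/F_{s-1}A_\infty\cong\im(\beta)/B^\infty_s$ (you spell out Lemma~\ref{lem:ACB} rather than cite it), part (b) passes to $\lim_r$/$\Rlim_r$ of the short exact sequence(s) of towers $0\to Z^r_s/\ker(\gamma)\to\im^{r-1}A_{s-1}\to\im^rA_s\to 0$ — your splitting into two towers just makes explicit the identifications $\lim_r(Z^r_s/\ker\gamma)\cong Z^\infty_s/\ker\gamma$ and $\Rlim_r(Z^r_s/\ker\gamma)\cong RE^\infty_s$ that the paper leaves implicit — and part (c) is the same Mittag--Leffler observation. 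No gaps.
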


\begin{proof}
(a)
The inclusions $B^\infty_s \subset \im(\beta) = \ker(\gamma) \subset
Z^\infty_s$ lead to a short exact sequence
$$
0 \to \im(\beta)/B^\infty_s \to E^\infty_s
	\to Z^\infty_s/\ker(\gamma) \to 0 \,.
$$
By Lemma~\ref{lem:ACB} applied to the two morphisms
$\xymatrix{
E^1_s & A_s \ar[l]_-{\beta} \ar[r]^-{\iota_s} & A_\infty
}$
there is an isomorphism
$$
\im(\beta)/B^\infty_s \cong F_s A_\infty/F_{s-1} A_\infty \,.
$$

(b)
The short exact sequences
$$
0 \to Z^r_s/\ker(\gamma) \overset{\gamma}\longto
	\im^{r-1} A_{s-1} \overset{\alpha}\longto \im^r A_s \to 0
$$
give the stated six term exact sequence upon passage to limits.

(c)
If $RE^\infty_s = 0$ for all $s$, then $\alpha \: Q_{s-1} \to Q_s$
is surjective for each $s$.  This implies that $\lim_s Q_s \to Q_s$
is surjective for each $s$.
\end{proof}

\begin{proof}[Proof of Theorem~\ref{thm:GH2.2.5}]
We are assuming that $A_{-\infty} = 0$, $RA_{-\infty} = 0$ and
$RE^\infty = 0$.  The filtration of $A_\infty$ is exhaustive
because
$$
F_\infty A_\infty = \colim_s \im(A_s \to A_\infty)
\cong \im(\colim_s A_s \to A_\infty) = A_\infty \,.
$$
It is also complete, because the derived limit of the surjections
$A_s \onto F_s A_\infty$ is a surjection $0 = RA_{-\infty} \onto
RF_{-\infty} A_\infty$.  By Lemma~\ref{lem:Boa5.4b}, $\lim_s Q_s =
A_{-\infty} = 0$.  Lemma~\ref{lem:Boa5.6} then implies that $Q_s = 0$
for all $s \in \bZ$.  Furthermore, $RQ_{s-1} \cong RQ_s$,
$Z^\infty_s/\ker(\gamma) = 0$ and $F_s A_\infty/F_{s-1} A_\infty \cong
E^\infty_s$, for all $s \in \bZ$.

The short exact sequences
$$
0 \to K_\infty \im^r A_s \to \im^r A_s
	\overset{\iota_s|}\longto F_{s-r} A_\infty \to 0
$$
give the exact sequence
$$
Q_s \to F_{-\infty} A_\infty
	\to \Rlim_r K_\infty \im^r A_s \to RQ_s
$$
upon passage to limits over~$r$.
The Mittag--Leffler short exact sequence
\begin{equation} \label{eq:MLses}
0 \to \Rlim_s Q_s \longto RA_{-\infty} \longto \lim_s RQ_s \to 0
\end{equation}
of \cite{Boa99}*{3.4(b)}
simplifies to $0 = RA_{-\infty} \cong \lim_s
RQ_s \cong RQ_s$.  Hence $F_{-\infty} A_\infty \cong \Rlim_r K_\infty
\im^r A_s$ for all~$s$, and $F_{-\infty} A_\infty \cong W$.
\end{proof}

\begin{proof}[Proof of Theorem~\ref{thm:GH2.2.4}]
We are assuming that $A_\infty = 0$ and $RE^\infty = 0$.  The filtration
of $A_{-\infty}$ is complete and Hausdorff by Lemma~\ref{lem:Boa5.4b}.
It is exhaustive because
$$
F_\infty A_{-\infty}
= \colim_s \ker(A_{-\infty} \to A_s)
\cong \ker(A_{-\infty} \to \colim_s A_s)
= A_{-\infty} \,,
$$
since $\colim_s A_s = A_\infty = 0$.

By Lemma~\ref{lem:Boa5.6} we have short exact sequences
$$
0 \to E^\infty_s \overset{\gamma}\longto Q_{s-1}
	\overset{\alpha}\longto Q_s \to 0
$$
and isomorphisms $\alpha \: RQ_{s-1} \cong RQ_s$, for all $s \in
\bZ$.  Furthermore, $\lim_s Q_s \to Q_s$ is surjective.  Hence,
by Lemma~\ref{lem:Boa5.4b} the image of $\pi_s \: A_{-\infty} \to
A_s$ is equal to $Q_s$.  The surjections $A_{-\infty} \onto Q_{s-1}
\overset{\alpha}\onto Q_s$ lead to the short exact sequence
$$
0 \to F_{s-1} A_{-\infty} \longto
F_s A_{-\infty} \longto \ker(\alpha) \to 0 \,.
$$
Thus $F_s A_{-\infty} / F_{s-1} A_{-\infty} \cong \ker(\alpha)
\cong E^\infty_s$.

The Mittag--Leffler sequence~\eqref{eq:MLses}
simplifies to an isomorphism $RA_{-\infty} \cong
\lim_s RQ_s \cong \colim_s RQ_s$.  Furthermore, $K_\infty \im^r A_s
= \im^r A_s$, so $\Rlim_r K_\infty \im^r A_s = RQ_s$
and $W = \colim_s RQ_s$.
\end{proof}

\section{Cartan--Eilenberg systems}

\begin{definition}
Let $\cI$ be a linearly ordered set, and let $\cI^{[1]}$ be its
\emph{arrow category}, with one object $(i,j)$ for each pair in $\cI$
with $i \le j$, and a single morphism from $(i,j)$ to $(i',j')$, where
$i'\le j'$, precisely when $i \le i'$ and $j \le j'$.

Let $\cI^{[2]}$ be the category with one object $(i,j,k)$ for each
triple in $\cI$ with $i \le j \le k$, and a single morphism $(i,j,k)
\to (i',j',k')$, where $i' \le j' \le k'$, precisely when $i\le i'$,
$j\le j'$ and $k\le k'$.

Let $d_0$, $d_1$ and $d_2 \: \cI^{[2]} \to \cI^{[1]}$ be the functors
mapping $(i,j,k)$ to $(j,k)$, $(i,k)$ and $(i,j)$, respectively.
There are natural transformations $\iota \: d_2 \to d_1$ and $\pi \:
d_1 \to d_0$, with components $\iota \: (i,j) \to (i,k)$ and $\pi \:
(i,k) \to (j,k)$, respectively.
\end{definition}

View the set $\bZ$ of integers as linearly ordered, with the usual
ordering.

\begin{definition}[\cite{CE56}*{XV.7}]
\label{def:I-system}
An \emph{$\cI$-system} $(H, \partial)$ in $\cA$ is a functor $H \:
\cI^{[1]} \to \cA$ and a natural transformation $\partial \: H d_0
\to H d_2$ of functors $\cI^{[2]} \to \cA$,
such that the triangle
$$
\xymatrix{
H d_2 \ar[r]^-{H\iota} & H d_1 \ar[d]^-{H\pi} \\
& H d_0 \ar[ul]^-{\partial} 
}
$$
is exact.  We assume that the internal degrees of $H\iota$ and $H\pi$
are $0$, and that $\partial$ has internal degree~$-1$.  A $\bZ$-system is
called a (homological) \emph{Cartan--Eilenberg system}.
\end{definition}

\begin{remark}
The functor $H$ assigns an object $H(i,j)$ to each pair $(i,j)$ with $i
\le j$ in $\cI$, and a morphism
$$
\eta \: H(i,j) \longto H(i',j')
$$
to each morphism $(i,j) \to (i',j')$.  By functoriality, $\eta \: H(i,j)
\to H(i,j)$ is the identity, and the composite $\eta \circ \eta \: H(i,j)
\to H(i',j') \to H(i'',j'')$ is equal to $\eta \: H(i,j) \to H(i'',j'')$.
In particular, $\eta = H\pi \circ H\iota \: H(i,j) \to
H(i',j')$ for $\iota \: (i,j) \to (i,j')$ and $\pi \: (i,j') \to (i',j')$.
The natural transformation $\partial$ has components
$$
\partial_{(i,j,k)} \: H(j,k) \longto H(i,j)
$$
for each triple $(i,j,k)$ with $i \le j \le k$ in $\cI$, and
$$
\eta \circ \partial_{(i,j,k)} = \partial_{(i',j',k')} \circ \eta
	\: H(j,k) \to H(i',j')
$$
whenever there is a morphism $(i,j,k) \to (i',j',k')$.
The triangle
\begin{equation} \label{eq:Hijk-exact-triangle}
\xymatrix{
H(i,j) \ar[r]^-{\eta} & H(i,k) \ar[d]^-{\eta} \\
& H(j,k) \ar[ul]^-{\partial}
}
\end{equation}
is exact, where $\partial = \partial_{(i,j,k)}$.
In particular, $H(i,i) = 0$ for each~$i$ in $\cI$.
\end{remark}

\begin{example}
In the language of \cite{LurHA}*{1.2.2}: If $\cC$ is a stable
$\infty$-category equipped with a t-structure, and if $X \in \Gap(\cI,
\cC)$ is an $\cI$-complex in $\cC$, then $H = \pi_*(X)$ with $H(i,j)
= (\pi_t X(i,j))_t$ is an $\cI$-system.  In particular, if $X$ is a
$\bZ$-complex then $H = \pi_*(X)$ is a Cartan--Eilenberg system.
\end{example}

\begin{definition} \label{def:ZrBrEr}
Let $(H, \partial)$ be a Cartan--Eilenberg system.
For each integer $s$ let $E^1_s = H(s-1,s)$, and
for each integer $r\ge1$ let
\begin{align*}
Z^r_s & = \ker(\partial \: E^1_s \to H(s-r,s-1)) \\
B^r_s &= \im(\partial \: H(s,s+r-1) \to E^1_s) \\
E^r_s &= Z^r_s / B^r_s
\end{align*}
define the $r$-cycles, $r$-boundaries and $E^r$-term, respectively.
These form sequences
$$
0 = B^1_s \subset \dots \subset B^r_s \subset \dots
\subset Z^r_s \subset \dots \subset Z^1_s = E^1_s \,,
$$
so each $E^r$-term is a subquotient of the $E^1$-term.
\end{definition}

\begin{remark}
We note that
\begin{align*}
Z^r_s &= \im(\eta \: H(s-r,s) \to E^1_s) \\
B^r_s &= \ker(\eta \: E^1_s \to H(s-1,s+r-1)) \,,
\end{align*}
by exactness.
Furthermore, $\partial \: E^1_s \to H(s-r,s-1)$ factors
through $\eta \: E^1_s \to H(s-1,s+r-1)$ by naturality, so $B^r_s
\subset Z^r_s$ by exactness.
$$
\xymatrix{
H(s-r,s-1) \ar[r]^-{\eta} & H(s-r,s) \ar[dd]^-{\eta} \\
\\
& E^1_s \ar[rr]^-{\eta} \ar[uul]^-{\partial}
	&& H(s-1,s+r-1) \ar[d]^-{\eta} \\
& && H(s,s+r-1) \ar[ull]^-{\partial}
}
$$
\end{remark}

\begin{lemma}
For integers $r\ge1$ and $s$ there is an isomorphism
$$
Z^r_s/Z^{r+1}_s \overset{\cong}\longto B^{r+1}_{s-r}/B^r_{s-r}
$$
given by $[x] \mapsto [\partial(\tilde x)]$ for $\eta(\tilde x) = x$,
where $\eta \: H(s-r,s) \to E^1_s$ and $\partial \: H(s-r,s) \to E^1_{s-r}$.
\end{lemma}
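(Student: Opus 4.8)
The plan is to deduce the isomorphism as a direct application of Lemma~\ref{lem:ACB} to a well-chosen pair of morphisms out of the common source $C = H(s-r, s)$. Concretely, I would take
$$
f = \eta \: H(s-r, s) \longto H(s-1, s) = E^1_s
$$
and
$$
g = \partial \: H(s-r, s) \longto H(s-r-1, s-r) = E^1_{s-r} \,,
$$
where $g$ is the component $\partial_{(s-r-1, s-r, s)}$ of the structure transformation. With these choices the two ``image'' terms of Lemma~\ref{lem:ACB} are already the desired objects: by the alternative description $Z^r_s = \im(\eta \: H(s-r,s) \to E^1_s)$ we have $\im(f) = Z^r_s$, while directly from Definition~\ref{def:ZrBrEr} we have $\im(g) = \im(\partial \: H(s-r, s) \to E^1_{s-r}) = B^{r+1}_{s-r}$, since $(s-r) + (r+1) - 1 = s$.

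The substance of the argument is to identify $f(\ker g)$ with $Z^{r+1}_s$ and $g(\ker f)$ with $B^r_{s-r}$. For the first: exactness of the triangle~\eqref{eq:Hijk-exact-triangle} for the triple $(s-r-1, s-r, s)$ gives $\ker(g) = \ker(\partial) = \im(\eta \: H(s-r-1, s) \to H(s-r, s))$, so pushing forward along $f = \eta$ and using functoriality $\eta \circ \eta = \eta$ yields $f(\ker g) = \im(\eta \: H(s-r-1, s) \to E^1_s) = Z^{r+1}_s$. For the second: exactness of the triangle for the triple $(s-r, s-1, s)$ gives $\ker(f) = \ker(\eta) = \im(\eta \: H(s-r, s-1) \to H(s-r, s))$, and naturality of $\partial$ along the morphism $(s-r-1, s-r, s-1) \to (s-r-1, s-r, s)$ of $\cI^{[2]}$ identifies $\partial_{(s-r-1,s-r,s)} \circ \eta$ with $\partial_{(s-r-1,s-r,s-1)}$ on $H(s-r,s-1)$, whence $g(\ker f) = \im(\partial \: H(s-r, s-1) \to E^1_{s-r}) = B^r_{s-r}$, since $(s-r) + r - 1 = s-1$.

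Given these four identifications, Lemma~\ref{lem:ACB} immediately produces the isomorphism $Z^r_s/Z^{r+1}_s \cong B^{r+1}_{s-r}/B^r_{s-r}$, and the description $[a] \mapsto [g(c)]$ for $f(c) = a$ furnished by that lemma is exactly the stated formula $[x] \mapsto [\partial(\tilde x)]$ for $\eta(\tilde x) = x$. I expect the only real obstacle to be bookkeeping with the index triples --- in particular, orienting the naturality square for $\partial$ correctly so that $g(\ker f)$ comes out as $B^r_{s-r}$ and not as a larger submodule of $E^1_{s-r}$; everything else is substitution into Lemma~\ref{lem:ACB} and the exact triangles. Drawing a diagram in the style of the Remark preceding this lemma, tracking $H(s-r-1,s)$, $H(s-r,s-1)$, $H(s-r,s)$, $E^1_s$ and $E^1_{s-r}$, should make all of these verifications routine.
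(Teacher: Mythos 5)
Your proposal is correct and follows exactly the paper's own argument: apply Lemma~\ref{lem:ACB} to the pair $E^1_s \overset{\eta}\longleftarrow H(s-r,s) \overset{\partial}\longrightarrow E^1_{s-r}$ and identify $\im(\eta) = Z^r_s$, $\eta(\ker\partial) = Z^{r+1}_s$, $\im(\partial) = B^{r+1}_{s-r}$ and $\partial(\ker\eta) = B^r_{s-r}$. The paper simply asserts these four identifications, whereas you verify them via the exact triangles and naturality of $\partial$; your verifications are correct.
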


\begin{proof}
Apply Lemma~\ref{lem:ACB} to the two morphisms
$$
\xymatrix{
E^1_s & H(s-r,s) \ar[l]_-{\eta} \ar[r]^-{\partial} & E^1_{s-r}
\,,
}
$$
noting that $\im(\eta) = Z^r_s$,
$\eta(\ker(\partial)) = Z^{r+1}_s$,
$\im(\partial) = B^{r+1}_{s-r}$ and
$\partial(\ker(\eta)) = B^r_{s-r}$.
\end{proof}

\begin{definition}
For integers $r\ge1$ and $s$ let $d^r_s \: E^r_s \longto E^r_{s-r}$
be the composite morphism
$$
\xymatrix{
E^r_s = Z^r_s/B^r_s \ar@{->>}[r]
	& Z^r_s/Z^{r+1}_s \ar[r]^-{\cong}
	& B^{r+1}_{s-r}/B^r_{s-r} \ar@{ >->}[r]
	& Z^r_{s-r}/B^r_{s-r} = E^r_{s-r} \,.
}
$$
More explicitly, it is given by $[x] \mapsto [\partial(\tilde x)]$, where $x
\in Z^r_s \subset E^1_s$ and $\tilde x \in H(s-r,s)$ satisfy $\eta(\tilde
x) = x$, and $\partial(\tilde x) \in Z^r_{s-r} \subset E^1_{s-r}$.
\end{definition}

\begin{proposition}
For each Cartan--Eilenberg system $(H, \partial)$, the
associated sequence $(E^r, d^r)_{r\ge1}$ is a spectral sequence.
More precisely, $\ker(d^r_s) = Z^{r+1}_s/B^r_s$ contains $\im(d^r_{s+r})
= B^{r+1}_s/B^r_s$, so $d^r_s \circ d^r_{s+r} = 0$, and there is an
isomorphism
$$
E^{r+1}_s \cong H_s(E^r, d^r)
$$
given by $[x] \mapsto [\bar x]$, where $x \in Z^{r+1}_s$ maps
to $\bar x \in Z^{r+1}_s/B^r_s$.
\end{proposition}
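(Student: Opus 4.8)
The plan is to extract $\ker(d^r_s)$ and $\im(d^r_{s+r})$ directly from the factorization of $d^r_s$ recorded just above, namely as the composite of the quotient map $q \colon Z^r_s/B^r_s \onto Z^r_s/Z^{r+1}_s$, the isomorphism $Z^r_s/Z^{r+1}_s \cong B^{r+1}_{s-r}/B^r_{s-r}$ of the preceding lemma, and the inclusion $B^{r+1}_{s-r}/B^r_{s-r} \hookrightarrow Z^r_{s-r}/B^r_{s-r}$. Throughout I will use the inclusions $B^r_s \subset B^{r+1}_s \subset Z^{r+1}_s \subset Z^r_s$ from the chain in Definition~\ref{def:ZrBrEr}; if one wishes to avoid quoting that chain, these follow from $\partial \circ \partial = 0$, which is itself two applications of exactness of the triangle~\eqref{eq:Hijk-exact-triangle}.

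First I would compute the kernel. Since the isomorphism and the inclusion in the factorization are monomorphisms, $\ker(d^r_s) = \ker(q) = Z^{r+1}_s/B^r_s$, using $B^r_s \subset Z^{r+1}_s \subset Z^r_s$. Next I would compute the image, replacing $s$ by $s+r$ in the factorization: the quotient map is an epimorphism and the middle map an isomorphism, so $\im(d^r_{s+r})$ is the image of the inclusion $B^{r+1}_s/B^r_s \hookrightarrow Z^r_s/B^r_s = E^r_s$, that is, $\im(d^r_{s+r}) = B^{r+1}_s/B^r_s$.

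It then remains to compare these two subobjects of $E^r_s = Z^r_s/B^r_s$. Since $B^{r+1}_s \subset Z^{r+1}_s$ we get $B^{r+1}_s/B^r_s \subset Z^{r+1}_s/B^r_s$, i.e.\ $\im(d^r_{s+r}) \subset \ker(d^r_s)$, and in particular $d^r_s \circ d^r_{s+r} = 0$. Passing to homology and applying the third isomorphism theorem,
$$
H_s(E^r, d^r) = (Z^{r+1}_s/B^r_s) \big/ (B^{r+1}_s/B^r_s) \cong Z^{r+1}_s/B^{r+1}_s = E^{r+1}_s \,,
$$
and unwinding the identifications shows this isomorphism is inverse to the assignment $[x] \mapsto [\bar x]$ for $x \in Z^{r+1}_s$ mapping to $\bar x \in Z^{r+1}_s/B^r_s$. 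Finally $d^r_s$ has target $E^r_{s-r}$, hence filtration degree $-r$, and internal degree $-1$ because $\partial$ does, so $(E^r, d^r)_{r\ge1}$ meets the axioms of a spectral sequence.

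I expect no genuine difficulty here: the argument is just the observation that a composite surjection--isomorphism--injection has kernel equal to that of the surjection and image equal to that of the injection, combined with the third isomorphism theorem. The one point needing care is the bookkeeping with the four inclusions $B^r_s \subset B^{r+1}_s \subset Z^{r+1}_s \subset Z^r_s$, so that every quotient written above is legitimate and the kernel and image come out exactly as stated rather than merely up to isomorphism.
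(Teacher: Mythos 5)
Your proposal is correct and follows essentially the same route as the paper: the paper likewise reads off $\ker(d^r_s)$ and $\im(d^r_{s+r})$ from the surjection--isomorphism--injection factorization defining $d^r_s$, and identifies $H_s(E^r,d^r)$ with $E^{r+1}_s$ via the Noether (third) isomorphism $Z^{r+1}_s/B^{r+1}_s \cong (Z^{r+1}_s/B^r_s)/(B^{r+1}_s/B^r_s)$. You have merely spelled out the bookkeeping that the paper declares ``evident from the definition.''
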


\begin{proof}
The calculation of $\ker(d^r_s)$ and $\im(d^r_{s-r})$ is
evident from the definition of $d^r_s$.  The isomorphism in
question is the Noether isomorphism $Z^{r+1}_s/B^{r+1}_s \cong
(Z^{r+1}_s/B^r_s)/(B^{r+1}_s/B^r_s)$.
\end{proof}

\begin{remark}
The objects $Z^r_s$ and $B^r_s$ of Definition~\ref{def:ZrBrEr} agree
with those associated in~Lemma~\ref{lem:ascdesc} to the spectral
sequence $(E^r, d^r)_r$.
\end{remark}

\section{The canonical interchange morphism}

\begin{definition}
Given any functor $H \: \bZ^{[1]} \to \cA$, let $\kappa$ be the canonical
\emph{interchange morphism}
$$
\kappa \: \colim_j \lim_i H(i,j)
	\longto \lim_i \colim_j H(i,j) \,,
$$
as defined in \cite{ML98}*{IX.2(3)}.  The restriction $\kappa \iota_j$ of
$\kappa$ to $\lim_i H(i,j)$ is the limit over~$i$ of the colimit structure
maps $\iota_{i,j} \: H(i,j) \to \colim_j H(i,j)$.  The projection $\pi_i
\kappa$ of $\kappa$ to $\colim_j H(i,j)$ is the colimit over~$j$ of the
limit structure maps $\pi_{i,j} \: \lim_i H(i,j) \to H(i,j)$.
$$
\xymatrix{
\lim_i H(i, j) \ar[r]^-{\iota_j} \ar[dd]^-{\pi_{i,j}}
        & \colim_j \lim_i H(i, j) \ar[dr]^-{\kappa} \\
& & \lim_i \colim_j H(i,j) \ar[d]^-{\pi_i} \\
H(i,j) \ar[rr]^-{\iota_{i,j}}
        && \colim_j H(i,j) 
}
$$
\end{definition}

\begin{theorem} \label{thm:lambda-kappa-sequence}
Let $(H, \partial)$ be a Cartan--Eilenberg system.
There is a natural exact sequence
$$
0 \to \colim_j \Rlim_i H(i,j) \overset{\lambda}\longto
	\Rlim_i \colim_j H(i,j) \longto
	\colim_j \lim_i H(i,j) \overset{\kappa}\longto
	\lim_i \colim_j H(i,j) \to 0 \,,
$$
where $\cok(\lambda) = \colim_j \Rlim_i \cok(\iota_{i,j}) \cong \colim_j
\lim_i \ker(\iota_{i,j}) = \ker(\kappa)$ and the middle morphism has
internal degree~$-1$.  In particular, the interchange morphism $\kappa$
is surjective.
\end{theorem}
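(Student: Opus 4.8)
The plan is to extract everything from a single two-variable double sequence and the classical $\lim$–$\Rlim$ (Milnor) six-term exact sequences, applied twice: once in the $i$-direction and once in the $j$-direction. Write $M(i,j) = H(i,j)$, thought of as a functor on $\bZ^{[1]}$. First I would fix $j$ and form the tower $(M(i,j))_i$ as $i \to -\infty$; no wait — the relevant variance is that $i$ and $j$ both increase, and for fixed $j$ the maps $M(i,j) \to M(i',j)$ with $i \le i'$ form a \emph{direct} system in $i$, so $\colim_i$ and $\lim_i$ and $\Rlim_i$ are all over the same directed index. The key structural input is the exact triangle \eqref{eq:Hijk-exact-triangle}: for $i \le j \le k$ it gives a short exact sequence relating $H(i,j)$, $H(i,k)$ and $H(j,k)$, and in particular, as $i \to -\infty$ with $j,k$ fixed, the connecting map $\partial$ lets us identify $\lim_i H(i,j)$ and $\Rlim_i H(i,j)$ with kernel and cokernel data of the tower. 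More directly, for each fixed $j$ one has the short exact sequence of towers (indexed by $i$)
$$
0 \to \ker(\iota_{i,j}) \longto H(i,j) \overset{\iota_{i,j}}\longto F_i^{(j)} \to 0,
$$
where $F_i^{(j)} = \im(\iota_{i,j}) \subset \colim_j H(i,j)$ is the image of the colimit structure map, and I would want to relate $\colim_j H(i,j)$ to $\colim_j H(i,j)$ via the analogous analysis — this bookkeeping is where care is needed.

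The cleanest route is: (1) For fixed $j$, apply $\lim_i$/$\Rlim_i$ to the tower $(H(i,j))_i$. Since each $H(i,j)$ sits in the colimit $\colim_j H(i,j)$ only after also taking $\colim_j$, the honest move is to first take $\colim_j$ of the whole picture. So instead: for fixed $i$, the maps $H(i,j) \to H(i,j')$ for $j \le j'$ give a direct system; let $C_i = \colim_j H(i,j)$. The exact triangle shows $\ker(H(i,j) \to C_i)$ and $\cok(H(i,j) \to C_i) = 0$ (colimits are exact), so we get short exact sequences $0 \to N(i,j) \to H(i,j) \to \im_{i,j} \to 0$ with $N(i,j) = \ker(\iota_{i,j})$ and $\colim_j \im_{i,j} = C_i$, $\colim_j N(i,j) = 0$. (2) Now apply $\lim_i$/$\Rlim_i$ in $i$ to the short exact sequence $0 \to N(i,j) \to H(i,j) \to \im_{i,j} \to 0$ of direct systems in $i$ (for fixed $j$), getting a six-term sequence; then take $\colim_j$, which is exact, to obtain
$$
\cdots \to \colim_j \lim_i N(i,j) \to \colim_j \lim_i H(i,j) \overset{\kappa'}\longto \colim_j \lim_i \im_{i,j} \to \colim_j \Rlim_i N(i,j) \to \colim_j \Rlim_i H(i,j) \to \cdots.
$$
(3) The point is then to identify $\colim_j \lim_i \im_{i,j}$ with $\lim_i \colim_j H(i,j) = \lim_i C_i$ and to check that $\kappa'$ agrees with $\kappa$; and to identify $\colim_j \Rlim_i \im_{i,j}$ with $\Rlim_i \colim_j H(i,j)$. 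These identifications come from commuting $\colim_j$ past $\lim_i$ and $\Rlim_i$ when one of the systems ($\im_{i,j}$ in $j$) is suitably nice — in fact $\colim_j \im_{i,j} = C_i$ by construction, and the surjectivity $\im_{i,j} \onto \im_{i,j'}$? no, these inclusions go the right way as a filtration of $C_i$, and a filtered colimit of towers interchanges with $\lim$ and $\Rlim$ up to the correction terms that are precisely $\colim_j \lim_i N(i,j)$ and $\colim_j \Rlim_i N(i,j)$. Running the Mittag–Leffler / interchange comparison carefully collapses the long sequence to the asserted four-term sequence, with $\ker(\kappa) \cong \colim_j \Rlim_i N(i,j) = \colim_j \Rlim_i \ker(\iota_{i,j})$ (the $\lim_i N(i,j)$ terms vanish or get absorbed) and $\cok(\lambda) \cong \colim_j \Rlim_i \cok(\iota_{i,j})$, these two being identified via Lemma~\ref{lem:ACB} applied to $H(i,j) \to C_i$ and $H(i,j) \to H(i,\infty)$-type maps. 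Surjectivity of $\kappa$ then falls out as the tail of the sequence, since the term to the right of $\kappa$ is $0$.

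The main obstacle I expect is step (3): justifying the interchange of $\colim_j$ with $\lim_i$ and with $\Rlim_i$ and pinning down that the resulting map is literally the categorical interchange morphism $\kappa$ of \cite{ML98}*{IX.2(3)}, rather than merely an abstractly isomorphic map. Colimits are exact and commute with $\Rlim$ only under a Mittag–Leffler-type hypothesis, which here is supplied by the structure of the $\im_{i,j}$ as an exhaustive filtration of $C_i$ together with exactness of the triangle \eqref{eq:Hijk-exact-triangle}; making this precise — and checking the internal-degree bookkeeping, so that the middle map has degree $-1$ coming from $\partial$ — is the technical heart. Everything else is diagram-chasing the Milnor sequence and invoking Lemma~\ref{lem:ACB} to match the two descriptions of $\ker(\kappa)$.
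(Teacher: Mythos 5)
Your overall skeleton --- decompose $H(i,j)$ via $K(i,j)=\ker(\iota_{i,j})$ and $I(i,j)=\im(\iota_{i,j})$, run the six-term $\lim_i$/$\Rlim_i$ sequences for fixed $j$, and then apply the exact functor $\colim_j$ --- is the same as the paper's. But your step (3) is where the theorem actually lives, and as written it contains both a gap and an error. The gap: you never use the third leg of the colimit exact triangle, namely $\partial\: D_j \to H(i,j)$ where $D_j = \colim_k H(j,k)$. Exactness of that triangle gives the short exact sequence $0 \to C(i,j) \to D_j \overset{\partial}\longto K(i,j) \to 0$ with $C(i,j) = \cok(\iota_{i,j})$, and this is what supplies every interchange you gesture at: (a) $D_j$ is constant as a tower in $i$, so $\Rlim_i D_j = 0$ and hence $\Rlim_i K(i,j) = 0$ by right exactness of $\Rlim$, which is what lets you replace $\Rlim_i H(i,j)$ by $\Rlim_i I(i,j)$; (b) $\lim_i C(i,j) \hookrightarrow D_j$ and $\colim_j D_j = 0$ (because $H(j,j)=0$, a Cartan--Eilenberg axiom), so $\colim_j \lim_i C(i,j) = 0$, which is precisely the surjectivity of $\colim_j\lim_i I(i,j) \to \lim_i D_i$, i.e.\ of $\kappa$ --- this does not ``fall out as the tail of the sequence''; and (c) the connecting map $\delta \: \colim_j \lim_i K(i,j) \overset{\cong}\longto \colim_j \Rlim_i C(i,j)$ arising from this same short exact sequence is the isomorphism that splices the two resulting short exact sequences into the asserted four-term sequence, identifies $\ker(\kappa)$ with $\cok(\lambda)$, and accounts for the internal degree $-1$ of the middle morphism. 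Without (a)--(c) your argument would apply verbatim to an arbitrary functor $H\:\bZ^{[1]}\to\cA$ with no boundary map, for which the conclusion is false; so the exact triangle must enter concretely, not merely as an unspecified ``Mittag--Leffler-type hypothesis''.

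The error: you assert $\ker(\kappa) \cong \colim_j \Rlim_i N(i,j)$ with ``the $\lim_i N(i,j)$ terms vanish or get absorbed''. This is backwards. It is $\Rlim_i K(i,j)$ that vanishes, by (a) above, while $\ker(\kappa) = \colim_j \lim_i K(i,j)$, exactly as the statement of the theorem records; your version would force $\ker(\kappa)=0$ always, contradicting the examples in Section~8 and the entire point of the paper. Relatedly, $\colim_j \Rlim_i I(i,j)$ is \emph{not} isomorphic to $\Rlim_i D_i$ in general: that map is injective with cokernel $\colim_j \Rlim_i C(i,j) \cong \ker(\kappa)$, so the identification you propose in step (3) cannot be carried out.
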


\begin{proof}
To simplify the notation, let
$$
D_i = \colim_k H(i,k)
$$
for each integer $i$.  The colimit over $k$
of~\eqref{eq:Hijk-exact-triangle} is the exact triangle
$$
\xymatrix{
H(i,j) \ar[r]^-{\iota_{i,j}} & D_i \ar[d]^-{\eta} \\
	& D_j \ar[ul]^-{\partial} \,.
}
$$
Let $K(i,j) = \ker(\iota_{i,j})$, $I(i,j) = \im(\iota_{i,j})$ and $C(i,j)
= \cok(\iota_{i,j})$.  We then have short exact sequences
\begin{align*}
0 &\to K(i,j) \longto H(i,j) \longto I(i,j) \to 0 \,, \\
0 &\to I(i,j) \longto D_i \longto C(i,j) \to 0 \,, \\
0 &\to C(i,j) \longto D_j \overset{\partial}\longto K(i,j) \to 0 \,,
\end{align*}
for each $i \le j$.  Passing to limits over $i$ we obtain the exact
sequences
\begin{align*}
0 &\to \lim_i K(i,j) \longto \lim_i H(i,j) \longto \lim_i I(i,j)
	\overset{\delta}\longto \Rlim_i K(i,j) \longto
	\Rlim_i H(i,j) \longto \Rlim_i I(i,j) \to 0 \,, \\
0 &\to \lim_i I(i,j) \longto \lim_i D_i \longto \lim_i C(i,j)
	\overset{\delta}\longto \Rlim_i I(i,j) \longto
	\Rlim_i D_i \longto \Rlim_i C(i,j) \to 0 \,, \\
0 &\to \lim_i C(i,j) \longto D_j \longto \lim_i K(i,j)
	\overset{\delta}\longto \Rlim_i C(i,j) \longto
	0 \longto \Rlim_i K(i,j) \to 0 \,.
\end{align*}
Here
$\lim_i D_j = D_j$ and $\Rlim_i D_j = 0$, so $\Rlim_i K(i,j) = 0$.
Passing to colimits over $j$ we get the exact sequences
\begin{align*}
0 &\to \colim_j \lim_i K(i,j) \longto \colim_j \lim_i H(i,j) \longto
	\colim_j \lim_i I(i,j) \to 0 \,, \\
0 &\to \colim_j \Rlim_i H(i,j) \overset{\cong}\longto
	\colim_j \Rlim_i I(i,j) \to 0 \,, \\
0 &\to \colim_j \lim_i I(i,j) \longto \lim_i D_i \longto
	\colim_j \lim_i C(i,j) \overset{\delta}\longto \\
&\qquad\qquad \overset{\delta}\longto \colim_j \Rlim_i I(i,j) \longto
	\Rlim_i D_i \longto \colim_j \Rlim_i C(i,j) \to 0 \,, \\
0 &\to \colim_j \lim_i C(i,j) \longto \colim_j D_j \longto
	\colim_j \lim_i K(i,j) \overset{\delta}\longto
	\colim_j \Rlim_i C(i,j) \to 0 \,.
\end{align*}
Here $\colim_j D_j = 0$, since $H(j,j) = 0$ for each $j$.  This gives
us a a natural isomorphism
$$
\delta \: \colim_j \lim_i K(i,j) \overset{\cong}\longto
        \colim_j \Rlim_i C(i,j)
$$
of internal degree~$+1$.
Furthermore, $\colim_j \lim_i C(i,j) = 0$, so $\colim_j \lim_i I(i,j)
\cong \lim_i D_i$.  We therefore have natural short exact sequences
\begin{align*}
0 &\to \colim_j \lim_i K(i,j) \longto \colim_j \lim_i H(i,j)
	\overset{\kappa}\longto \lim_i D_i \to 0 \,, \\
0 &\to \colim_j \Rlim_i H(i,j) \overset{\lambda}\longto
        \Rlim_i D_i \longto \colim_j \Rlim_i C(i,j) \to 0 \,.
\end{align*}
By using $\delta^{-1}$ to splice these together, we obtain
the asserted four-term exact sequence.
\end{proof}

The whole-plane obstruction $W$ and the interchange kernel $\ker(\kappa)$
depend on the underlying exact couple and Cartan--Eilenberg system,
respectively, not just the associated spectral sequence.  Boardman gave a
useful sufficient criterion for the vanishing of $W$, which only depends
on data internal to the spectral sequence.  The analogous statement for
$\ker(\kappa)$ admits the following direct proof.

\begin{proposition} [cf.~\cite{Boa99}*{8.1}]
\label{prop:kerkappazero}
Let $(H, \partial)$ be a Cartan--Eilenberg system.  Suppose that $a$
and $b$ are such that $d^r_s \: E^r_s \to E^r_{s-r}$ is zero whenever
$s-r \le a$ and $s > b$.  Then $\ker(\kappa) = 0$.
\end{proposition}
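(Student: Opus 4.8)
The plan is to deduce this from Theorem~\ref{thm:lambda-kappa-sequence}, which identifies $\ker(\kappa)$ with $\cok(\lambda) = \colim_j \Rlim_i \cok(\iota_{i,j})$. Since this colimit may be formed over the cofinal set $\{j \ge b\}$, it will be enough to show that $\Rlim_i \cok(\iota_{i,j}) = 0$ whenever $j \ge b$. Throughout I keep the notation $D_i = \colim_k H(i,k)$ and $C(i,j) = \cok(\iota_{i,j})$, and the colimit exact triangles $H(i,j) \overset{\iota_{i,j}}\longto D_i \overset{\eta}\longto D_j \overset{\partial}\longto H(i,j)$, from the proof of Theorem~\ref{thm:lambda-kappa-sequence}; replacing $a$ by $\min(a,b)$ if necessary, we may assume $a \le b$.

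First I would organise the derived limit. From the colimit exact triangle, $C(i,j) = D_i/\ker(\eta\colon D_i \to D_j) \cong \im(\eta\colon D_i \to D_j)$ is a subobject of $D_j$, and the factorisation $D_{i-1} \to D_i \to D_j$ exhibits $C(i-1,j)$ as a subobject of $C(i,j)$. Thus, for each fixed $j$, the objects $C(i,j)$ with $i \le j$ form a tower of subobjects of $D_j$, whose derived limit over $i$ is $\Rlim_i C(i,j)$. Such a tower has vanishing derived limit as soon as it is eventually constant, so it suffices to prove that $C(i,j) = C(i-1,j)$ for all sufficiently small $i$ when $j \ge b$.

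The heart of the argument is the computation of the successive subquotients of this tower. The colimit exact triangle for the pair $(i-1,i)$, in which $H(i-1,i) = E^1_i$, gives $\im(\eta\colon D_{i-1} \to D_i) = \ker(\partial\colon D_i \to E^1_i)$; combined with the factorisation $D_{i-1} \to D_i \to D_j$ this identifies $C(i-1,j) = \eta(\ker(\partial\colon D_i \to E^1_i))$ as a subobject of $C(i,j) = \eta(D_i)$, where $\eta\colon D_i \to D_j$. Consequently $C(i,j)/C(i-1,j) \cong \im(\partial\colon D_i \to E^1_i)/\partial(\ker(\eta\colon D_i \to D_j))$. Running once more through the exact triangles and Definition~\ref{def:ZrBrEr}, the numerator is $\ker(\iota_{i-1,i}) = \colim_k \im(\partial\colon H(i,k) \to E^1_i) = \colim_r B^r_i = B^\infty_i$, while, using $\ker(\eta\colon D_i \to D_j) = \im(\iota_{i,j})$, the denominator is $\im(\partial\colon H(i,j) \to E^1_i) = B^{j-i+1}_i$. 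Hence
$$
C(i,j)/C(i-1,j) \;\cong\; B^\infty_i/B^{j-i+1}_i \,.
$$

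It remains to feed in the hypothesis. The defining description of $d^r_s$ yields $\im(d^r_s) \cong B^{r+1}_{s-r}/B^r_{s-r}$, so $d^r_s = 0$ forces $B^{r+1}_{s-r} = B^r_{s-r}$; writing $t = s-r$, the assumption therefore gives $B^{r+1}_t = B^r_t$ whenever $t \le a$ and $r > b-t$, and hence $B^\infty_i = B^r_i$ for all $i \le a$ and all $r \ge b-i+1$. When $i \le a$ and $j \ge b$ we have $j-i+1 \ge b-i+1$, so $B^{j-i+1}_i = B^\infty_i$ and the subquotient above vanishes, i.e.\ $C(i,j) = C(i-1,j)$. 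Thus for each $j \ge b$ the tower $(C(i,j))_{i\le j}$ is constant for $i \le a$, so $\Rlim_i C(i,j) = 0$, and therefore $\ker(\kappa) = \colim_j \Rlim_i C(i,j) = 0$. The step I expect to be the main obstacle is the subquotient computation: it requires a careful chase through several of the colimit exact triangles in order to recognise the arising images of $\partial$ as the subgroups $B^r_i$ and $B^\infty_i$ of Definition~\ref{def:ZrBrEr}. The remaining ingredients — that an eventually constant tower of subobjects has trivial derived limit, and that the colimit over $j$ may be restricted to $\{j \ge b\}$ — are routine.
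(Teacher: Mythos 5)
Your proof is correct, but it takes a genuinely different route from the paper's. The paper gives a direct element chase: it lifts a class $w \in \ker(\kappa)$ to a compatible family $(w_{i,\ell})_i$ for $\ell$ large, assumes some component is nonzero, and manufactures from it a nonzero differential $d^r_s$ with $s-r \le a$ and $s > b$, contradicting the hypothesis. You instead reuse Theorem~\ref{thm:lambda-kappa-sequence} to rewrite $\ker(\kappa)$ as $\colim_j \Rlim_i C(i,j)$ and kill the derived limits by a Mittag--Leffler argument: the tower $(C(i,j))_i$ of subobjects of $D_j$ has successive quotients $C(i,j)/C(i-1,j) \cong B^\infty_i/B^{j-i+1}_i$ (a computation that checks out, via Lemma~\ref{lem:ACB} applied to $D_j \leftarrow D_i \to E^1_i$ and the identifications $\im(\partial\colon D_i \to E^1_i) = B^\infty_i$, $\partial(\im(\iota_{i,j})) = B^{j-i+1}_i$), and the hypothesis, read through $\im(d^r_s) = B^{r+1}_{s-r}/B^r_{s-r}$, forces these quotients to vanish for $i \le a$, $j \ge b$, so the tower is eventually constant and $\Rlim_i C(i,j) = 0$. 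What the paper's argument buys is self-containedness and concreteness --- it does not invoke Theorem~\ref{thm:lambda-kappa-sequence} at all, and it exhibits the obstructing differential explicitly. What yours buys is a structural refinement: it isolates exactly which subquotients $B^\infty_i/B^{j-i+1}_i$ obstruct the vanishing of $\Rlim_i C(i,j)$, avoids the proof by contradiction, and makes transparent that the mechanism is stabilization of the boundary filtration in low filtration degrees; like the paper's proof, it also applies one internal degree at a time.
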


\begin{proof}
Let $w \in \ker(\kappa) \subset \colim_j \lim_i H(i,j)$ be the image
of $w_j = (w_{i,j})_i \in \lim_i H(i,j)$, for a fixed $j \ge a$.
Then $w_{i,j} \mapsto 0$ under $H(i,j) \to \colim_j H(i,j)$, for each
$i \le j$.  Let $w_{i,k}$ denote the image of $w_{i,j}$ under $\eta \:
H(i,j) \to H(i,k)$, for each $k \ge j$.  Choose $\ell \ge b$ so large that
$w_{a,\ell}  = 0$.  Then $w_\ell = (w_{i,\ell})_i \in \lim_i H(i, \ell)$
maps to $w$.  Consider any $i \le a$.  Suppose, for a contradiction, that
$w_{i,\ell} \ne 0$.  Then there is a minimal $s>\ell$ such that $w_{i,s}
= 0$.  By exactness at $H(i,s-1)$ we can choose an $x \in H(s-1,s) = E^1_s$
with $\partial(x) = w_{i,s-1}$.  There is also a minimal $s-r > i$ such
that $w_{s-r,s-1} = 0$.  We know that $s-r \le a$, and
by exactness at $H(i,s-1)$ we can choose a $y
\in H(i,s-r)$ with $\eta(y) = w_{i,s-1}$.
Let $z \in H(s-r-1,s-r) = E^1_{s-r}$ be the image of $y$.
\medskip
$$
\xymatrix{
y \ar@{|->}@(ur,ul)[rrr] \ar@{|->}[d] & w_{i,j} \ar@{|->}[r] \ar@{|->}[dd] & w_{i,\ell} \ar@{|->}[r] \ar@{|->}[dd]
	& w_{i,s-1} \ar@{|->}[r] \ar@{|->}[dd] & 0 \\
z \\
& w_{a,j} \ar@{|->}[r] & 0 \ar@{|->}[r] & 0 \\
& & & & x \ar@{|->}[uuul]!<0pt,4pt>_-{\partial} 
}
$$
By construction there is now a nonzero differential $d^r_s([x]) = [z]$.
This contradicts the hypothesis that these differentials are zero.  Hence
$w_{i,\ell} = 0$ for all $i$, so $w_\ell = 0$ and $w=0$.
Since $w \in \ker(\kappa)$ was arbitrary, this proves that $\ker(\kappa)
= 0$.
\end{proof}

\begin{remark}
This argument may be applied in one internal degree~$t$ at a time:
If there are integers $a(t)$ and $b(t)$ such that
$d^r_s \: (E^r_s)_{t+1} \to (E^r_{s-r})_t$ is zero whenever
$s-r \le a(t)$ and $s > b(t)$, then $\ker(\kappa)_t = 0$.
\end{remark}

\section{Right Cartan--Eilenberg systems}

Let $\bZ_{+\infty} = \bZ \cup \{+\infty\}$, $\bZ_{-\infty} = \bZ \cup \{-\infty\}$ and
$\bZ_{\pm\infty} = \bZ \cup \{\pm\infty\}$, and extend the linear ordering on
$\bZ$ to these sets by letting $+\infty$ and $-\infty$ be the greatest
and least elements, respectively.  Recall Definition~\ref{def:I-system}.

\begin{definition} \label{def:left-right-extended-cess}
A $\bZ_{+\infty}$-system is called a \emph{right Cartan--Eilenberg
system}, a $\bZ_{-\infty}$-system is called a \emph{left 
Cartan--Eilenberg system}, and a $\bZ_{\pm\infty}$-system is called an
\emph{extended Cartan--Eilenberg system}.  By restriction to $\bZ$, each
such system has an \emph{underlying} Cartan--Eilenberg system.
\end{definition}

We discuss right Cartan--Eilenberg systems in this section,
and turn to left Cartan--Eilenberg systems in the next section.

\begin{definition}
Let $(H, \partial)$ be a right Cartan--Eilenberg system.  Let $A''_s
= H(s, \infty)$ and $E^1_s = H(s-1,s)$ for each $s \in \bZ$, and let
$$
\xymatrix{
A''_{s-1} \ar[rr]^-{\alpha_s} && A''_s \ar[dl]^-{\beta_s} \\
& E^1_s \ar[ul]^-{\gamma_s}
}
$$
be given by
\begin{align*}
\alpha_s &= \eta \: H(s-1,\infty) \longto H(s,\infty) \\
\beta_s &= \partial \: H(s, \infty) \longto H(s-1,s) \\
\gamma_s &= \eta \: H(s-1,s) \longto H(s-1,\infty) \,.
\end{align*}
We call $(A'', E^1, \alpha, \beta, \gamma)$ the \emph{right couple}
associated to $(H, \partial)$.
\end{definition}

\begin{lemma}
The right couple $(A'', E^1, \alpha, \beta, \gamma)$ is an exact couple,
and the associated spectral sequence is equal to the one associated to
the underlying Cartan--Eilenberg system of $(H, \partial)$.
\end{lemma}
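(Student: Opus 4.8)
The plan is to verify the two assertions in turn, each by unwinding the definitions and appealing to the exact triangle~\eqref{eq:Hijk-exact-triangle} and to the naturality of $\partial$. Throughout, the essential point is that $+\infty$ is an honest element of $\bZ_{+\infty}$, so $A''_s = H(s,\infty)$ is a genuine object and every exact triangle and naturality square involving $\infty$ holds on the nose, with no colimit to pass through.

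For exactness of the right couple, I would observe that, after substituting the definitions of $\alpha_s$, $\beta_s$ and $\gamma_s$, the triangle
$$
A''_{s-1} \overset{\alpha_s}\longto A''_s \overset{\beta_s}\longto E^1_s \overset{\gamma_s}\longto A''_{s-1}
$$
is precisely the exact triangle~\eqref{eq:Hijk-exact-triangle} associated to the triple $(s-1,s,\infty)$, namely $H(s-1,\infty)\overset{\eta}\longto H(s,\infty)\overset{\partial}\longto H(s-1,s)\overset{\eta}\longto H(s-1,\infty)$. It then remains to check the degree bookkeeping: $\alpha$ has filtration degree $+1$ and internal degree $0$, $\beta=\partial$ has filtration degree $0$ and internal degree $-1$, and $\gamma=\eta$ has filtration degree $-1$ and internal degree $0$, so $(\beta,\gamma)$ realize the pattern $(-1,0)$ of internal degrees permitted for an exact couple.

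For the comparison of spectral sequences, I would match the data $Z^r_s$, $B^r_s$, $d^r_s$ computed from $(A'',E^1,\alpha,\beta,\gamma)$ via Definition~\ref{def:excoupleZrBrEr} with those of the underlying Cartan--Eilenberg system via Definition~\ref{def:ZrBrEr}. For $Z^r_s=\gamma^{-1}\im(\alpha^{r-1})$: the power $\alpha^{r-1}\colon A''_{s-r}\to A''_{s-1}$ is $\eta\colon H(s-r,\infty)\to H(s-1,\infty)$ by functoriality, and exactness of~\eqref{eq:Hijk-exact-triangle} for the triple $(s-r,s-1,\infty)$ identifies its image with $\ker(\partial\colon H(s-1,\infty)\to H(s-r,s-1))$; since naturality of $\partial$ along the $\cI^{[2]}$-morphism $(s-r,s-1,s)\to(s-r,s-1,\infty)$ gives $\partial\circ\gamma=\partial\colon E^1_s\to H(s-r,s-1)$, we obtain $Z^r_s=\ker(\partial\colon E^1_s\to H(s-r,s-1))$, as in Definition~\ref{def:ZrBrEr}. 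Dually, for $B^r_s=\beta\ker(\alpha^{r-1})$: here $\alpha^{r-1}$ is $\eta\colon H(s,\infty)\to H(s+r-1,\infty)$, whose kernel is $\im(\eta\colon H(s,s+r-1)\to H(s,\infty))$ by exactness for the triple $(s,s+r-1,\infty)$, and naturality of $\partial$ along $(s-1,s,s+r-1)\to(s-1,s,\infty)$ gives $\beta\circ\eta=\partial\colon H(s,s+r-1)\to E^1_s$, so $B^r_s=\im(\partial\colon H(s,s+r-1)\to E^1_s)$, again as in Definition~\ref{def:ZrBrEr}. Hence the two recipes define the same subquotient $E^r_s=Z^r_s/B^r_s$ of $E^1_s$. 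For the differential I would argue: given $x\in Z^r_s$, choose $\tilde x\in H(s-r,s)$ with $\eta(\tilde x)=x$, which is possible because $Z^r_s=\im(\eta\colon H(s-r,s)\to E^1_s)$ by the remark following Definition~\ref{def:ZrBrEr}; put $y=\eta(\tilde x)\in H(s-r,\infty)$. Functoriality of $H$ gives $\alpha^{r-1}(y)=\gamma(x)$, so $y$ is an admissible choice in the recipe $d^r_s([x])=[\beta(y)]$ of Definition~\ref{def:excoupleZrBrEr}, while naturality of $\partial$ along $(s-r-1,s-r,s)\to(s-r-1,s-r,\infty)$ gives $\beta(y)=\partial(\tilde x)$, which is exactly the value of the Cartan--Eilenberg differential. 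Thus the two spectral sequences coincide.

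I expect no conceptual difficulty here; the only thing requiring attention is keeping track of which instance of $\eta$ or of $\partial$ occurs at each stage and citing the correct morphism of $\cI^{[2]}$ for each application of naturality.
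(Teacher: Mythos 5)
Your proposal is correct and follows exactly the route of the paper's own proof: exactness comes from the triangle~\eqref{eq:Hijk-exact-triangle} for $(s-1,s,\infty)$, and the identifications of $Z^r_s$, $B^r_s$ and $d^r_s$ are the diagram chases the paper leaves implicit, which you carry out correctly using functoriality of $H$ and naturality of $\partial$ along the appropriate morphisms of $\cI^{[2]}$.
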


\begin{proof}
The exact triangles for $(s-1,s,\infty)$ of the right Cartan--Eilenberg
system form the right (exact) couple.  Diagram chases show that
$\gamma^{-1} \im(\alpha^{r-1})
	= Z^r_s = \ker(\partial \: E^1_s \to H(s-r,s-1))$,
that
$\beta \ker(\alpha^{r-1})
	= B^r_s = \im(\partial \: H(s,s+r-1) \to E^1_s)$,
and that the definitions of the $d^r$-differentials agree,
for all $r\ge1$ and $s$.
\end{proof}

\begin{proposition} \label{prop:cc2limit}
Let $(H, \partial)$ be a right (resp.~extended) Cartan--Eilenberg
system. The right couple $(A'', E^1)$ is conditionally convergent to
the limit if and only if
$$
\bar\eta \: \colim_j H(i,j) \to H(i, \infty)
$$
is an isomorphism for some $i \in \bZ$ (resp.~$i \in \bZ_{-\infty}$),
in which case it is an isomorphism for every $i \in \bZ$ (resp.~$i \in
\bZ_{-\infty}$).
\end{proposition}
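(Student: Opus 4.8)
The plan is to derive everything from one exact triangle. Fix $i$ --- an integer, or $i = -\infty$ in the extended case. For every integer $j \geq i$ the exact triangle \eqref{eq:Hijk-exact-triangle} of the triple $(i,j,\infty)$ is
$$
H(i,j) \overset{\eta}\longto H(i,\infty) \overset{\eta}\longto H(j,\infty) \overset{\partial}\longto H(i,j) \,,
$$
exact at each of its three vertices. As $j$ ranges over $\{j \in \bZ : j \geq i\}$ these fit into a filtered diagram of exact triangles, and since filtered colimits are exact in $\cA$, applying $\colim_j$ yields the exact triangle
$$
\colim_j H(i,j) \overset{\bar\eta}\longto H(i,\infty) \overset{\iota_i}\longto A''_\infty \overset{\partial}\longto \colim_j H(i,j) \,.
$$
Here I use that $\{j \in \bZ : j \geq i\}$ is cofinal in $\bZ$ to identify $\colim_j H(j,\infty)$ with $A''_\infty = \colim_s H(s,\infty)$; under this identification the map $H(i,\infty) \to \colim_j H(j,\infty)$ induced by the maps $\eta \: H(i,\infty) \to H(j,\infty)$ is precisely the colimit structure map $\iota_i \: A''_i \to A''_\infty$, since $\eta \: H(i,\infty) \to H(j,\infty)$ is the iterated $\alpha$; and the leftmost map is $\bar\eta$ by definition.

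Both implications are then short chases in this four-term exact sequence. Suppose $\bar\eta$ is an isomorphism for the chosen $i$. Exactness at $H(i,\infty)$ gives $\ker(\iota_i) = \im(\bar\eta) = H(i,\infty)$, hence $\iota_i = 0$; exactness at $\colim_j H(i,j)$ gives $\im(\partial) = \ker(\bar\eta) = 0$, hence $\partial = 0$; and exactness at $A''_\infty$ then gives $A''_\infty = \ker(\partial) = \im(\iota_i) = 0$, so the right couple converges conditionally to the limit. Conversely, if $A''_\infty = 0$, then $\iota_i$ and $\partial$ vanish for trivial reasons, and exactness of the same triangle at $H(i,\infty)$ and at $\colim_j H(i,j)$ forces $\bar\eta$ to be surjective and injective, respectively; since $i$ was arbitrary, $\bar\eta$ is an isomorphism for every $i$. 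The extended case needs no change: for $i = -\infty$ the triple $(-\infty,j,\infty)$ and its exact triangle are still available, the colimit over integers $j \geq -\infty$ is the colimit over all of $\bZ$, and $\colim_j H(j,\infty)$ is again $A''_\infty$.

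I do not expect a real obstacle here. The two points to handle carefully are the exactness of $\colim_j$ applied to the triangle and, more to the point of the ``for some $i$'' phrasing, the identification of the middle map of the colimited triangle with the structure map $\iota_i$: it is this identification that promotes ``$\iota_i = 0$'' from $F_i A''_\infty = 0$ to the full statement $A''_\infty = 0$, so that a single value of $i$ already suffices.
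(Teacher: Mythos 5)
Your proof is correct and follows the same route as the paper: take the colimit over $j$ of the exact triangles for $(i,j,\infty)$ to obtain the exact triangle $\colim_j H(i,j) \to H(i,\infty) \to A''_\infty \to \colim_j H(i,j)$, and read off both implications by exactness. You merely spell out the diagram chase and the identification of $\colim_j H(j,\infty)$ with $A''_\infty$ in more detail than the paper does.
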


\begin{proof}
The colimit over~$j$ of the exact triangles for $(i,j,\infty)$
gives an exact triangle
$$
\xymatrix{
\colim_j H(i,j) \ar[rr]^-{\bar\eta}
	&& H(i, \infty) \ar[dl] \\
& \colim_j H(j, \infty) \ar[ul]
}
$$
for each $i$, so $A''_\infty = \colim_j H(j, \infty)$ is zero
if and only if $\bar\eta \: \colim_j H(i,j) \to H(i, \infty)$ is an
isomorphism for some $i$, and this implies that $\bar\eta$ is an
isomorphism for each $i$.
\end{proof}

\begin{lemma}
Each underlying (resp.~left) Cartan--Eilenberg system $(H, \partial)$ can
be prolonged, in an essentially unique way, to a right (resp.~extended)
Cartan--Eilenberg system whose right couple $(A'', E^1)$ is conditionally
convergent to its limit.
\end{lemma}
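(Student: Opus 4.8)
The plan is to build the prolongation by \emph{defining} the new objects to be the relevant filtered colimits, so that the criterion of Proposition~\ref{prop:cc2limit} holds on the nose. Treating the underlying case (the left case is identical, with $\bZ$ replaced by $\bZ_{-\infty}$ throughout), I would extend $H$ from $\bZ^{[1]}$ to $\bZ_{+\infty}^{[1]}$ by setting
$$
H(i,\infty) = \colim_{j} H(i,j)
$$
for $i \in \bZ$, together with $H(\infty,\infty) = 0$ (the latter being forced, since $H(i,i)=0$ for every $i$ in any Cartan--Eilenberg system). The structure map $\eta \: H(i,\infty) \to H(i',\infty)$ for $i \le i'$ in $\bZ$ is the colimit over $j$ of the maps $\eta \: H(i,j) \to H(i',j)$, the map $\eta \: H(i,\infty) \to H(\infty,\infty)$ is zero, and the maps $\eta$ out of $H(\infty,\infty)=0$ are forced. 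The boundary $\partial_{(i,j,\infty)} \: H(j,\infty) \to H(i,j)$, for $i \le j$ in $\bZ$, is the colimit over $k \ge j$ of the maps $\partial_{(i,j,k)} \: H(j,k) \to H(i,j)$, which is legitimate because the target $H(i,j)$ does not depend on $k$; the remaining new boundaries have zero source or target and are forced. Functoriality of $H$ and naturality of $\partial$ on the enlarged categories then follow from the corresponding identities over $\bZ$ together with the universal property of the colimit, and I would only write out one or two representative cases. The internal-degree conditions of Definition~\ref{def:I-system} are inherited.

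The one step that is not pure bookkeeping is the exactness of the triangle \eqref{eq:Hijk-exact-triangle} for the triples involving $\infty$. For $(i,\infty,\infty)$ and $(\infty,\infty,\infty)$ the triangle is degenerate --- it has a zero vertex and an identity edge --- so exactness is immediate. For a triple $(i,j,\infty)$ with $i \le j$ in $\bZ$, the triangle
$$
\xymatrix{
H(i,j) \ar[r]^-{\eta} & H(i,\infty) \ar[d]^-{\eta} \\
	& H(j,\infty) \ar[ul]^-{\partial}
}
$$
is, by the definitions above, the filtered colimit over $k \ge j$ of the exact triangles \eqref{eq:Hijk-exact-triangle} for the triples $(i,j,k)$ (note that the vertex $H(i,j)$ does not depend on $k$). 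Since $\cA$ is a category of graded $R$-modules, in which filtered colimits are exact, this colimit triangle is again exact. This completes the verification that the extended $(H,\partial)$ is a right (resp.~extended) Cartan--Eilenberg system, and by construction $\bar\eta \: \colim_j H(i,j) \to H(i,\infty)$ is an isomorphism for every $i$, so Proposition~\ref{prop:cc2limit} shows that its right couple $(A'',E^1)$ is conditionally convergent to the limit.

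For essential uniqueness, let $(H',\partial')$ be any prolongation whose right couple converges conditionally to the limit. Then $H'(\infty,\infty)=0$ is forced, and by Proposition~\ref{prop:cc2limit} the canonical map $\bar\eta \: \colim_j H(i,j) \to H'(i,\infty)$ is an isomorphism for each $i$. Naturality of $\eta$ in $H'$ makes these isomorphisms commute with the maps $\eta \: H'(i,\infty) \to H'(i',\infty)$, and naturality of $\partial'$ applied to the morphisms $(i,j,k) \to (i,j,\infty)$ identifies $\partial'_{(i,j,\infty)}$, under $\bar\eta$, with $\colim_k \partial_{(i,j,k)}$. Hence there is a unique isomorphism from the prolongation constructed above to $(H',\partial')$ that restricts to the identity on the common underlying (resp.~left) Cartan--Eilenberg system. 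I expect the exactness-of-filtered-colimits point to be the only substantive one; the rest is manipulation of universal properties.
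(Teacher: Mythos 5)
Your proposal is correct and follows the same route as the paper: define $H(i,\infty)=\colim_j H(i,j)$ and observe that the exact triangle for $(i,j,\infty)$ is the filtered colimit over $k$ of the exact triangles for $(i,j,k)$, hence exact. The paper's proof is just a terser version of this (it leaves the bookkeeping and the essential-uniqueness argument, which you correctly derive from Proposition~\ref{prop:cc2limit}, to the reader).
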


\begin{proof}
Let
$H(i, \infty) = \colim_j H(i,j)$
for each $i$.  The exact triangle
$$
\xymatrix{
H(i,j) \ar[r]^-{\eta} & H(i, \infty) \ar[d]^-{\eta} \\
& H(j, \infty) \ar[ul]^-{\partial}
}
$$
is the colimit over~$k$ of the exact triangles for $(i,j,k)$.
\end{proof}

\begin{remark}
Cartan and Eilenberg assume in \cite{CE56}*{XV.7} that $H(i,j)$ is
defined for all $-\infty \le i \le j \le \infty$, with $\colim_j H(i,j)
\cong H(i, \infty)$ for all $i \in \bZ_{-\infty}$.  In our terminology
this means that they only consider extended Cartan--Eilenberg systems
with right couples that are conditionally convergent to their limits.
We emphasize the underlying Cartan--Eilenberg systems, with $H(i,j)$
defined for finite~$i$ and~$j$, since this structure suffices to define
the interchange morphism~$\kappa$.
\end{remark}

\begin{theorem} \label{thm:W''iskerkappa}
Let $(H, \partial)$ be a right Cartan--Eilenberg system.  There is
a natural isomorphism
$$
W'' \overset{\cong}\longto \ker(\kappa)
$$
of internal degree~$-1$,
where $W'' = \colim_s \Rlim_r K_\infty \im^r A''_s$ is Boardman's
whole-plane obstruction group for the right couple $(A'', E^1)$, and
$\kappa$ is the interchange morphism.
\end{theorem}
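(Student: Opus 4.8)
The plan is to compare both $W''$ and $\ker(\kappa)$ with the object $\colim_j\Rlim_i C(i,j)$ that already occurs in Theorem~\ref{thm:lambda-kappa-sequence}; here, in the notation of its proof, $C(i,j) = \cok(\iota_{i,j}\colon H(i,j)\to D_i)$ and $D_i = \colim_k H(i,k)$. That proof supplies a natural isomorphism $\delta\colon \ker(\kappa) = \colim_j\lim_i K(i,j)\overset{\cong}\longto\colim_j\Rlim_i C(i,j)$ of internal degree $+1$, so it is enough to produce a natural isomorphism $W''\cong\colim_j\Rlim_i C(i,j)$ of internal degree $0$ and then apply $\delta^{-1}$.

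First I would assemble two families of exact triangles from \eqref{eq:Hijk-exact-triangle}. The colimit over $k$ gives $H(i,j)\overset{\iota_{i,j}}\longto D_i\overset{\eta}\longto D_j\overset{\partial}\longto H(i,j)$, so that $C(i,j)\cong\ker(\partial\colon D_j\to H(i,j))$ as a subobject of $D_j$. The colimit over $j$ of the triangles for the triples $(i,j,\infty)$ gives $D_i\overset{\bar\eta}\longto H(i,\infty)\overset{\iota_i}\longto A''_\infty\longto D_i$, where $\bar\eta\colon D_i\to H(i,\infty)$ is the canonical map and $\iota_i$ is the colimit structure map of $A''$, with $A''_\infty = \colim_s H(s,\infty)$; in particular $\ker(\iota_s) = \im(\bar\eta\colon D_s\to H(s,\infty))$. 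Write $N_s = \ker(\bar\eta\colon D_s\to H(s,\infty))$.

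The heart of the matter is to identify, for each fixed $s$, the tower $(K_\infty\im^r A''_s)_{r\ge1}$ with $(C(s-r,s)/N_s)_{r\ge1}$. Naturality of $\partial$ makes the map $\partial\colon D_s\to H(s-r,s)$ of the first triangle factor as $D_s\overset{\bar\eta}\longto H(s,\infty)\overset{\partial}\longto H(s-r,s)$, and by the triangle for $(s-r,s,\infty)$ the kernel of this second $\partial$ is exactly $\im^r A''_s$. Hence, using the elementary module-theoretic identity $f(\ker(g\circ f)) = \im(f)\cap\ker(g)$ together with $\im(\bar\eta) = \ker(\iota_s)$, the map $\bar\eta$ carries $C(s-r,s) = \ker(\partial\colon D_s\to H(s-r,s))$ onto $\ker(\iota_s)\cap\im^r A''_s = K_\infty\im^r A''_s$. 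The crucial point is that the kernel of this surjection is all of $N_s$: since $\bar\eta$ vanishes on $N_s$, so does $\partial = \partial_{(s-r,s,\infty)}\circ\bar\eta\colon D_s\to H(s-r,s)$, and therefore $N_s\subseteq C(s-r,s)$ for every $r$. Thus $\bar\eta$ induces isomorphisms $C(s-r,s)/N_s\overset{\cong}\longto K_\infty\im^r A''_s$ compatible with the inclusions as $r$ grows, i.e.\ a short exact sequence of towers $0\to N_s\to(C(s-r,s))_r\to(K_\infty\im^r A''_s)_r\to 0$ whose left-hand term is the constant tower with value $N_s$.

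Since $\Rlim_r$ kills a constant tower, the six-term $\lim_r$--$\Rlim_r$ sequence of this short exact sequence collapses to a natural isomorphism $\Rlim_r C(s-r,s)\overset{\cong}\longto\Rlim_r K_\infty\im^r A''_s$ of internal degree $0$, for each $s$; passing to $\colim_s$ and reindexing by $i=s-r$, $j=s$ (noting that $\Rlim_i$ over $i\le s$ does not see the term $i=s$) gives $W''\cong\colim_j\Rlim_i C(i,j)$, and composing with $\delta^{-1}$ yields $W''\cong\ker(\kappa)$ of internal degree $-1$, naturally in $(H,\partial)$. I expect the one genuine obstacle to be the containment $N_s\subseteq C(s-r,s)$: it is exactly the statement that the portion of $H(s,\infty)$ invisible to $\colim_k H(s,k)$ contributes only a constant tower and is therefore annihilated by $\Rlim_r$, which is why $W''$ does not depend on the chosen prolongation. (One could instead observe that $D_s = \colim_k H(s,k)$ together with $E^1$ forms an exact couple conditionally convergent to its limit, since $\colim_s D_s = 0$ as in the proof of Theorem~\ref{thm:lambda-kappa-sequence}, whose whole-plane obstruction group is $\colim_j\Rlim_i C(i,j)\cong\ker(\kappa)$, with $\bar\eta$ furnishing a comparison to $(A'',E^1)$; but the direct route above is shorter.)
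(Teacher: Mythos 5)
Your proposal is correct and follows essentially the same route as the paper: both identify $C(i,j)\cong\ker(\partial\colon D_j\to H(i,j))$, show that the map $D_j\to A''_j$ carries it onto $K_\infty\im^{j-i}A''_j$ with kernel constant in $i$ (namely $\im(\partial\colon A''_\infty\to D_j)=\ker(\bar\eta)$), kill that kernel upon applying $\Rlim_i$, and transport the result to $\ker(\kappa)$ via the isomorphism $\delta$ from Theorem~\ref{thm:lambda-kappa-sequence}. The only cosmetic difference is that the paper invokes right exactness of $\Rlim_i$ on the exact sequence $A''_\infty\to C(i,j)\to K_\infty\im^{j-i}A''_j\to 0$ rather than your six-term sequence for a short exact sequence of towers with constant subtower.
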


\begin{proof}
For each $i \le j \le k < \infty$ we have a commutative diagram
$$
\xymatrix{
& H(i,j) \ar@{=} [r] \ar[d] & H(i,j) \ar[d] \\
H(k, \infty) \ar[r]^-{\partial} \ar@{=}[d]
	& H(i,k) \ar[r] \ar[d] & H(i,\infty) \ar[r] \ar[d]
	& H(k, \infty) \ar@{=}[d] \\
H(k, \infty) \ar[r]^-{\partial}
	& H(j,k) \ar[r] \ar[d]^-{\partial}
	& H(j,\infty) \ar[r] \ar[d]^-{\partial}
	& H(k, \infty) \\
& H(i,j) \ar@{=}[r] & H(i,j)
}
$$
with exact rows and columns.  Passing to colimits over~$k$ we
get the commutative diagram
$$
\xymatrix{
& H(i,j) \ar@{=} [r] \ar[d]^-{\iota_{i,j}} & H(i,j) \ar[d] \\
A''_\infty \ar[r]^-{\partial} \ar@{=}[d]
        & D_i \ar[r] \ar[d] & A''_i \ar[r] \ar[d]^-{\alpha^{j-i}}
        & A''_\infty \ar@{=}[d] \\
A''_\infty \ar[r]^-{\partial}
        & D_j \ar[r] \ar[d]^-{\partial}
        & A''_j \ar[r]^-{\iota_j} \ar[d]^-{\partial}
        & A''_\infty \\
& H(i,j) \ar@{=}[r] & H(i,j)
}
$$
with exact rows and columns.  Here $D_i = \colim_k H(i,k)$ and $A''_i =
H(i,\infty)$, while $A''_\infty = \colim_k A''_k$.  The homomorphism
$D_j \to A''_j$ maps
$$
\ker(\partial \: D_j \to H(i,j)) = \im(D_i \to D_j) \cong 
\cok(\iota_{i,j}) = C(i,j)
$$
onto
\begin{align*}
\im(D_j \to A''_j) \cap \ker(\partial \: A''_j \to H(i,j)) 
&= \ker(\iota_j \: A''_j \to A''_\infty)
	\cap \im(\alpha^{j-i} \: A''_i \to A''_j) \\
&= K_\infty \im^{j-i} A''_j \,,
\end{align*}
with kernel $\im(\partial \: A''_\infty \to D_j)$.  Hence there is an
exact sequence
$$
A''_\infty \overset{\partial}\longto
	C(i,j) \longto K_\infty \im^{j-i} A''_j \to 0 \,.
$$
By right exactness of $\Rlim_i$, we obtain an exact sequence
$$
\Rlim_i A''_\infty \overset{\partial}\longto
	\Rlim_i C(i,j) \longto \Rlim_i K_\infty \im^{j-i} A''_j \to 0
$$
where $\Rlim_i A''_\infty = 0$.
Hence the right hand map is an isomorphism.  Passing to colimits over~$j$
we obtain the isomorphism
$$
\colim_j \Rlim_i C(i,j)
	\cong \colim_j \Rlim_i K_\infty \im^{j-i} A''_j = W'' \,.
$$
By Theorem~\ref{thm:lambda-kappa-sequence}, the left hand side
is isomorphic to $\ker(\kappa)$.
\end{proof}

\section{Left Cartan--Eilenberg systems}


\begin{definition}
Let $(H, \partial)$ be a left Cartan--Eilenberg system.  Let $A'_s
= H(-\infty, s)$ and $E^1_s = H(s-1,s)$ for each $s \in \bZ$, and let
$$
\xymatrix{
A'_{s-1} \ar[rr]^-{\alpha_s} && A'_s \ar[dl]^-{\beta_s} \\
& E^1_s \ar[ul]^-{\gamma_s}
}
$$
be given by
\begin{align*}
\alpha_s &= \eta \: H(-\infty, s-1) \longto H(-\infty, s) \\
\beta_s &= \eta \: H(-\infty, s) \longto H(s-1,s) \\
\gamma_s &= \partial \: H(s-1,s) \longto H(-\infty,s-1) \,.
\end{align*}
We call $(A', E^1, \alpha, \beta, \gamma)$ the \emph{left couple}
associated to $(H, \partial)$.
\end{definition}

\begin{lemma}
The left couple $(A', E^1, \alpha, \beta, \gamma)$ is an exact couple,
and the associated spectral sequence is equal to the one
associated to the underlying Cartan--Eilenberg system of $(H, \partial)$.
\end{lemma}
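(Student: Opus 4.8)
The plan is to mirror the proof of the corresponding statement for the right couple, with the roles of the two structure maps $\eta$ and $\partial$ interchanged throughout. First I would observe that the triangle displayed in the definition of the left couple is precisely the exact triangle \eqref{eq:Hijk-exact-triangle} attached to the triple $(-\infty, s-1, s)$ of the $\bZ_{-\infty}$-system $(H,\partial)$: here $\beta_s$ is the map $\eta$ and $\gamma_s$ is the map $\partial$ of that triangle. Exactness is therefore immediate from Definition~\ref{def:I-system}, so $(A', E^1, \alpha, \beta, \gamma)$ is an exact couple. What remains is to reconcile the invariants $Z^r_s$, $B^r_s$ and $d^r_s$ of Definition~\ref{def:excoupleZrBrEr} for this couple with those of Definition~\ref{def:ZrBrEr} for the underlying Cartan--Eilenberg system.

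For the $r$-cycles I would chase the exact triangles for $(-\infty, s-r, s-1)$ and $(-\infty, s-1, s)$: an element $x \in E^1_s = H(s-1,s)$ lies in $\gamma^{-1}\im(\alpha^{r-1})$ exactly when $\partial(x) \in H(-\infty, s-1)$ lies in $\im(\eta \: H(-\infty, s-r) \to H(-\infty, s-1)) = \ker(\eta \: H(-\infty, s-1) \to H(s-r, s-1))$, and naturality of $\partial$ along the morphism $(-\infty, s-1, s) \to (s-r, s-1, s)$ rewrites $\eta(\partial(x)) = 0$ as $\partial(x) = 0$ in $H(s-r, s-1)$, i.e.\ $x \in Z^r_s$. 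The parallel chase, using the triangle for $(-\infty, s, s+r-1)$ and naturality of $\partial$ along $(-\infty, s, s+r-1) \to (s-1, s, s+r-1)$, identifies $\beta\ker(\alpha^{r-1})$ with $\im(\partial \: H(s, s+r-1) \to E^1_s) = B^r_s$.

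For the differential, given $x \in Z^r_s$ I would choose $y \in H(-\infty, s-r)$ with $\alpha^{r-1}(y) = \gamma(x)$, so that the exact-couple differential sends $[x]$ to $[\beta(y)] = [\eta(y)]$, and lift $x$ along $\eta \: H(s-r,s) \to E^1_s$ to $\tilde x$, so that the Cartan--Eilenberg differential sends $[x]$ to $[\partial(\tilde x)]$. Writing $\partial(\tilde x)$ and $\eta(y)$ as the images in $E^1_{s-r}$ of $\partial_{(-\infty,s-r,s)}(\tilde x)$ and $y$ in $H(-\infty, s-r)$, naturality of $\partial$ shows that these two elements of $H(-\infty, s-r)$ differ by an element of $\im(\partial \: H(s-r, s-1) \to H(-\infty, s-r))$, whose image in $E^1_{s-r}$ lies in $B^r_{s-r}$; hence $[\beta(y)] = [\partial(\tilde x)]$ in $E^r_{s-r}$ and the two spectral sequences coincide. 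I expect the only real difficulty to be bookkeeping: one must keep the various maps written $\eta$ and $\partial$ (for the several different triples that enter) carefully apart, and invoke the correct instance of naturality of $\partial$, or of functoriality of $H$ (for instance that $\eta \circ \eta = \id$ on $H(s-r,s-1)$), at each step. There is no conceptual obstacle beyond what was already handled in the right-couple case.
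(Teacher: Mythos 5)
Your proposal is correct, but it takes a genuinely different route from the paper. You redo, for the left couple, the direct diagram chases that the paper carries out for the right couple: identifying $\gamma^{-1}\im(\alpha^{r-1})$ with $\ker(\partial\: E^1_s \to H(s-r,s-1))$ and $\beta\ker(\alpha^{r-1})$ with $\im(\partial\: H(s,s+r-1)\to E^1_s)$ via the exact triangles for $(-\infty,s-r,s-1)$ and $(-\infty,s,s+r-1)$ together with naturality of $\partial$, and then checking that the two descriptions of $d^r_s$ agree modulo $B^r_{s-r}$. I verified the key step in your differential comparison: with $z'=\partial_{(-\infty,s-r,s)}(\tilde x)$ one has $\eta(z')=\partial_{(-\infty,s-1,s)}(x)=\eta(y)$ in $H(-\infty,s-1)$, so $z'-y$ lies in $\im(\partial\: H(s-r,s-1)\to H(-\infty,s-r))$, whose image under $\beta_{s-r}$ is exactly $B^r_{s-r}$; this is right. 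The paper instead avoids all of these chases: it uses the map of exact couples $(A'',E^1)\to(A',E^1)$ given by $\partial\: H(s,\infty)\to H(-\infty,s)$ on the $A$-terms and the identity on $E^1$ (implicitly prolonging the left system to an extended one so that $A''$ exists), observes that the induced map of spectral sequences is the identity on $E^1$ and hence on every $E^r$, and thereby reduces the left-couple statement to the already-established right-couple lemma. Your argument buys self-containedness --- it needs neither the prolongation nor the right-couple result --- at the cost of repeating the bookkeeping; the paper's argument is shorter and illustrates the useful general principle that a map of spectral sequences which is an isomorphism on some $E^r$ is an isomorphism on all later terms.
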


\begin{proof}
The exact triangles for $(-\infty,s-1,s)$ of the left Cartan--Eilenberg
system form the left (exact) couple.  The map of exact couples $(A'', E^1)
\to (A', E^1)$ given by $\partial \: A''_s = H(s, \infty) \to H(-\infty,
s) = A'_s$ and $\id \: E^1_s \to E^1_s$, for each $s \in \bZ$, induces
a map of spectral sequences.  It is the identity at the $E^1$-term,
hence is also the identity map at all later $E^r$-terms.
\end{proof}

\begin{proposition} \label{prop:cc2colimit}
Let $(H, \partial)$ be a left (resp.~extended) Cartan--Eilenberg system
whose left couple $(A', E^1)$ is conditionally convergent to its colimit.
Then for each $j \in \bZ$ (resp.~$j \in \bZ_{+\infty}$) there is a short
exact sequence
$$
0 \to \Rlim_i H(i,j) \overset{\partial}\longto H(-\infty,j)
	\overset{\tilde\eta}\longto \lim_i H(i,j) \to 0 \,,
$$
where $\partial$ has internal degree~$-1$.
\end{proposition}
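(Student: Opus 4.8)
The plan is to extract the short exact sequence from the exact triangle
\eqref{eq:Hijk-exact-triangle} for the triple $(-\infty, i, j)$, by passing
to the limit over $i$ and using conditional convergence to the colimit to
kill the correction terms. First I would fix $j$ and consider, for each
$i \le j$, the exact triangle relating $H(-\infty,i)$, $H(-\infty,j)$ and
$H(i,j)$, namely the long exact sequence
$$
\dots \longto H(-\infty,i) \overset{\alpha}\longto H(-\infty,j)
	\overset{\tilde\eta}\longto H(i,j)
	\overset{\partial}\longto H(-\infty,i) \longto \dots \,.
$$
Rewriting this as short exact sequences, one gets
$0 \to \cok(\alpha \: A'_i \to A'_j) \to H(i,j) \overset{\partial}\to
\ker(\alpha \: A'_{i} \to A'_j) \to 0$, or more usefully, splitting the
four-term piece, the short exact sequences
$0 \to \ker(\tilde\eta) \to H(-\infty,j) \overset{\tilde\eta}\to
\im(\tilde\eta) \to 0$ and
$0 \to \im(\tilde\eta) \to H(i,j) \overset{\partial}\to
\im(\partial \: H(i,j) \to A'_i) \to 0$, where $\ker(\tilde\eta \: A'_j
\to H(i,j)) = \im(\alpha \: A'_i \to A'_j) = F_i A'_j$ in the notation for
the filtration of the colimit.

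Next I would pass to the limit over $i$ (these form inverse systems as $i$
decreases). Applying $\lim_i$ and $\Rlim_i$ to
$0 \to F_i A'_j \to H(-\infty,j) \to \im(\tilde\eta) \to 0$ gives
$$
0 \to \lim_i F_i A'_j \to H(-\infty,j) \to \lim_i \im(\tilde\eta)
	\to \Rlim_i F_i A'_j \to \Rlim_i H(-\infty,j) \to \dots
$$
Here $\Rlim_i H(-\infty,j) = 0$ since the system is constant in $i$, and
$\lim_i F_i A'_j = F_{-\infty} A'_j$, $\Rlim_i F_i A'_j = RF_{-\infty} A'_j$
for the filtration $F_i A'_j = \im(\alpha^{j-i} \: A'_i \to A'_j)$ of the
colimit $A'_\infty = \colim_i A'_i$. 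The key input is that conditional
convergence to the colimit means $A'_{-\infty} = \lim_i A'_i = 0$ and
$RA'_{-\infty} = \Rlim_i A'_i = 0$; by the same argument as in
Lemma~\ref{lem:Boa5.4b} (applied to the filtration of the colimit rather
than the limit), this forces $F_{-\infty} A'_\infty = 0$ and
$RF_{-\infty} A'_\infty = 0$, hence $F_{-\infty} A'_j = 0$ and
$RF_{-\infty} A'_j = 0$ for each $j$. Actually the cleanest route is to
observe directly that the inverse system $(F_i A'_j)_i$, as $i \to -\infty$,
has limit $\lim_i \im(\alpha^{j-i} \: A'_i \to A'_j)$, which receives
$\lim_i A'_i = A'_{-\infty} = 0$ compatibly and whose $\Rlim$ sits in a
sequence controlled by $\Rlim_i A'_i = 0$; I would cite Lemma~\ref{lem:Boa5.4b}'s
proof technique verbatim. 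This yields $H(-\infty,j) \cong \lim_i \im(\tilde\eta)$.

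Finally I would feed this into the limit of the second short exact sequence
$0 \to \im(\tilde\eta) \to H(i,j) \overset{\partial}\to P(i,j) \to 0$,
where $P(i,j) = \im(\partial \: H(i,j) \to A'_i)$. Passing to $\lim_i$
gives $0 \to \lim_i \im(\tilde\eta) \to \lim_i H(i,j) \to \lim_i P(i,j)
\to \Rlim_i \im(\tilde\eta) \to \dots$, and I want to identify
$\ker(\partial \: H(-\infty,j) \to \lim_i A'_i)$ — but since $\lim_i A'_i = 0$,
the composite $H(-\infty,j) \to H(i,j) \overset{\partial}\to A'_i$ has
image landing in a system with trivial limit. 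The tidiest packaging is
probably to splice the two SES's back into the four-term exact sequence
$0 \to F_i A'_j \to H(-\infty,j) \to H(i,j) \to P(i,j) \to 0$ and take
$\lim_i$ of the whole thing at once, breaking it via the short exact
sequence $0 \to Y(i,j) \to H(i,j) \to P(i,j) \to 0$ with
$Y(i,j) = \ker(\partial) = \im(\tilde\eta)$; then $\Rlim_i$ of the first
SES already shows $\Rlim_i Y(i,j)$ surjects onto nothing problematic, and
one reads off $0 \to \Rlim_i H(i,j)$... wait, more precisely: take $\lim_i$
and $\Rlim_i$ of $0 \to F_iA'_j \to H(-\infty,j) \to Y(i,j) \to 0$ to get
$H(-\infty,j) \cong \lim_i Y(i,j)$ and $\Rlim_i Y(i,j) \cong \Rlim_i F_i A'_j
= 0$; then $\lim_i$ of $0 \to Y(i,j) \to H(i,j) \to P(i,j) \to 0$ gives
$0 \to H(-\infty,j) \to \lim_i H(i,j) \to \lim_i P(i,j) \to \Rlim_i Y(i,j) = 0$,
so the cokernel of $H(-\infty,j) \to \lim_i H(i,j)$ is $\lim_i P(i,j)$, which
I must show vanishes. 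But $P(i,j) = \im(\partial \: H(i,j) \to A'_i)$ and
$\lim_i P(i,j) \subset \lim_i A'_i = 0$. So in fact $\tilde\eta$ is
surjective onto $\lim_i H(i,j)$ with kernel $H(-\infty,j)$... this contradicts
the claimed statement unless I have the roles reversed.

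\textbf{Correction.} I have the two maps transposed: it is $\tilde\eta \:
H(-\infty,j) \to \lim_i H(i,j)$ that should be surjective with kernel
$\Rlim_i H(i,j)$ realized via $\partial$. So I would instead pass to $\lim_i$
of the original long exact sequence for the triple $(-\infty, i, j)$ written as
$$
0 \to F_i A'_j \to A'_j \overset{\tilde\eta_i}\longto H(i,j)
	\overset{\partial}\longto A'_i / \!\ker \to 0,
$$
but now treating $H(-\infty,j) = A'_j$ as \emph{constant} in $i$ and taking
$\lim_i$ over the \emph{system} $H(i,j)$: from
$0 \to F_iA'_j \to A'_j \to \im(\tilde\eta_i) \to 0$ I get
$\Rlim_i \im(\tilde\eta_i) = 0$ (using $\Rlim_i A'_j = 0$ and $\Rlim_i F_iA'_j
= RF_{-\infty}A'_j = 0$) and $\lim_i \im(\tilde\eta_i) = A'_j$; from the other
piece $0 \to \im(\partial[i+1]) \to \cdots$ — here the correct short exact
sequence is $0 \to \im(\tilde\eta_i) \to H(i,j) \to \ker(\alpha\:A'_i\to A'_{i})$...
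The hard part, and the step I expect to take the most care, is getting the
indexing of the connecting maps right so that $\lim_i H(i,j)$ and $\Rlim_i H(i,j)$
appear in the correct slots; once the diagram is set up, the vanishing of
$F_{-\infty}A'_j$, $RF_{-\infty}A'_j$, $\lim_i A'_i$ and $\Rlim_i A'_i$ — all
consequences of conditional convergence to the colimit together with
Lemma~\ref{lem:Boa5.4b}'s argument — does everything, and the internal-degree
$-1$ claim is immediate since $\partial$ carries that degree. For the extended
case with $j = +\infty$ one notes $H(i,+\infty) = \colim_k H(i,k) = D_i$ and
the same exact triangles for $(-\infty, i, +\infty)$ apply verbatim.
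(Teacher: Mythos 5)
Your overall strategy---take the exact triangles for the triples $(-\infty,i,j)$, break them into short exact sequences, and pass to $\lim_i$ and $\Rlim_i$ using conditional convergence---is exactly the paper's, but the execution has a genuine error at the decisive step. You claim that $\lim_i F_i A'_j$ and $\Rlim_i F_i A'_j$ vanish, where $F_i A'_j = \im(\alpha^{j-i}\: A'_i \to A'_j) = \im^{j-i} A'_j$, ``by the same argument as in Lemma~\ref{lem:Boa5.4b}.'' In Boardman's notation these are $Q_j$ and $RQ_j$. Only $RQ_j = 0$ is true here (it follows from $RA'_{-\infty}=0$ via the six-term sequence obtained from $0 \to K'(i,j) \to A'_i \to I'(i,j) \to 0$, not from Lemma~\ref{lem:Boa5.4b}). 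The group $Q_j$ itself does \emph{not} vanish under conditional convergence alone: Lemma~\ref{lem:Boa5.4b} only gives $\lim_j Q_j = A'_{-\infty} = 0$, and passing from that to $Q_j = 0$ for a fixed $j$ requires the surjectivity of $\lim_j Q_j \to Q_j$, i.e.\ Lemma~\ref{lem:Boa5.6}(c), which needs the additional hypothesis $RE^\infty = 0$ that is not assumed in this proposition. Indeed $Q_j$ is precisely the kernel of $\tilde\eta \: A'_j \to \lim_i H(i,j)$, and the whole content of the proposition is the identification $Q_j \cong \Rlim_i K'(i,j) \cong \Rlim_i H(i,j)$ (via the connecting map of the first sequence and the $\Rlim$-portion of the sequence coming from $0 \to C'(i,j) \to H(i,j) \overset{\partial}\to K'(i,j) \to 0$). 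Setting $Q_j = 0$ amounts to asserting $\Rlim_i H(i,j) = 0$, which is why your first pass produced the ``contradiction'' that $\tilde\eta$ is an isomorphism.

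Your ``Correction'' does not repair this: you diagnose the problem as a transposition of the two maps and an indexing issue, but you still plan to invoke ``the vanishing of $F_{-\infty}A'_j$, $RF_{-\infty}A'_j$'' as the input that ``does everything.'' The first of these is exactly the false statement, and the step you defer as bookkeeping---getting $\Rlim_i H(i,j)$ to ``appear in the correct slot''---is the mathematical heart of the proof: one must chase $\Rlim_i H(i,j) \cong \Rlim_i K'(i,j) \cong Q_j \subset A'_j$ through the connecting homomorphisms, which is also what exhibits the inclusion as induced by $\partial$ and explains the internal degree $-1$. Until that identification is carried out (and the appeal to $Q_j = 0$ is removed), the argument does not prove the proposition.
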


\begin{proof}
Fix~$j$, and consider the exact triangles
$$
\xymatrix{
A'_i \ar[r]^-{\alpha^{j-i}}
	& A'_j \ar[d]^-{\eta} \\
& H(i,j) \ar[ul]^-{\partial}
}
$$
for integers $i$ with $i \le j$.  (Some notational modifications are
appropriate for $j=\infty$, but otherwise the argument is the same.)  Let
\begin{align*}
K'(i,j) &= \ker(\alpha^{j-i} \: A'_i \to A'_j) \,, \\
I'(i,j) &= \im(\alpha^{j-i} \: A'_i \to A'_j) \,, \\
C'(i,j) &= \cok(\alpha^{j-i} \: A'_i \to A'_j) \,,
\end{align*}
leading to the short exact sequences
\begin{align*}
0 &\to K'(i,j) \longto A'_i \longto I'(i,j) \to 0 \,, \\
0 &\to I'(i,j) \longto A'_j \longto C'(i,j) \to 0 \,, \\
0 &\to C'(i,j) \longto H(i,j) \overset{\partial}\longto K'(i,j) \to 0 \,.
\end{align*}
Here $I'(i,j) = \im^{j-i} A'_j$, so $\lim_i I'(i,j) = Q_j$ and $\Rlim_i
I'(i,j) = RQ_j$ in Boardman's notation (Definition~\ref{def:Q-RQ-W}).
Passing to limits over~$i$, we obtain the exact sequences
\begin{align*}
0 &\to \lim_i K'(i,j) \to A'_{-\infty} \to Q_j
	\to \Rlim_i K'(i,j) \to RA'_{-\infty} \to RQ_j \to 0 \,, \\
0 &\to Q_j \to A'_j \overset{\tilde\eta}\longto \lim_i C'(i,j)
	\to RQ_j \to 0 \to \Rlim_i C'(i,j) \to 0 \,, \\
0 &\to \lim_i C'(i,j) \to \lim_i H(i,j) \overset{\partial}\longto
	\lim_i K'(i,j) \to \Rlim_i C'(i,j) \to \Rlim_i H(i,j)
	\overset{\partial}\longto \Rlim_i K'(i,j) \to 0 \,.
\end{align*}
By the assumption of conditional convergence, $A'_{-\infty}
= \lim_j A'_j = 0$ and $RA'_{-\infty} = \Rlim_j A'_j = 0$, so $\lim_i
K'(i,j) = 0$, $Q_j \cong \Rlim_i K'(i,j)$, $RQ_j = 0$, $\Rlim_i C'(i,j) =
0$, $\lim_i C'(i,j) \cong \lim_i H(i,j)$ and $\partial \: \Rlim_i H(i,j)
\cong \Rlim_i K'(i,j)$, so the short exact sequence $0 \to Q_j \to A'_j
\to \lim_i C'(i,j) \to 0$ can be rewritten in the form
$$
0 \to \Rlim_i H(i,j) \overset{\partial}\longto
	A'_j \overset{\tilde\eta}\longto \lim_i H(i,j) \to 0 \,,
$$
as claimed.
\end{proof}

\begin{remark}
There is probably no canonical prolongation of a Cartan--Eilenberg
system $(H, \partial)$ to a left Cartan--Eilenberg system, even if
we require that the left couple $(A', E^1, \alpha, \beta, \gamma)$
should be conditionally convergent to its colimit.  The objects $H(i,j)$
for integers $i\le j$ determine $\lim_i H(i,j)$ and $\Rlim_i H(i,j)$,
but these only determine $A'_j = H(-\infty, j)$ up to an extension.
\end{remark}

\begin{theorem} \label{thm:W'iskerkappa}
Let $(H, \partial)$ be a left Cartan--Eilenberg system.  There
is a natural isomorphism
$$
W' \overset{\cong}\longto \ker(\kappa)
$$
of internal degree~$0$,
where $W' = \colim_s \Rlim_r K_\infty \im^r A'_s$ is
Boardman's whole-plane obstruction group for the
left couple $(A', E^1)$, and $\kappa$ is the interchange morphism.
\end{theorem}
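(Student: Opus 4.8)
The plan is to imitate the proof of Theorem~\ref{thm:W''iskerkappa}, but to arrive at the interchange kernel in the form $\ker(\kappa) = \colim_j \lim_i \ker(\iota_{i,j})$ from Theorem~\ref{thm:lambda-kappa-sequence}, rather than in the form $\colim_j \Rlim_i \cok(\iota_{i,j})$ used there; reaching $\ker(\kappa)$ through $\lim_i \ker(\iota_{i,j})$ rather than through $\Rlim_i \cok(\iota_{i,j})$ is exactly what turns the degree~$-1$ isomorphism of Theorem~\ref{thm:W''iskerkappa} into one of degree~$0$. As in the proof of Theorem~\ref{thm:lambda-kappa-sequence} write $D_i = \colim_k H(i,k)$ and $K(i,j) = \ker(\iota_{i,j} \colon H(i,j) \to D_i)$, but now allow $i \in \bZ_{-\infty}$; then $D_{-\infty} = \colim_k H(-\infty,k) = A'_\infty$, the map $\iota_{-\infty,j}$ is the colimit structure map $\iota_j \colon A'_j \to A'_\infty$ of the left couple, and $K(-\infty,j) = \ker(\iota_j \colon A'_j \to A'_\infty)$. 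Reindexing by $i = s-r$, we may write $W' = \colim_j \Rlim_i K_\infty \im^{j-i} A'_j$, the derived limit being over the decreasing tower of subobjects $K_\infty \im^{j-i} A'_j \subseteq A'_j$ indexed by integers $i \le j$.

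The crucial step is to produce, for each $j \in \bZ$ and each integer $i \le j$, a natural short exact sequence
$$
0 \longto K_\infty \im^{j-i} A'_j \longto K(-\infty, j) \overset{\eta}\longto K(i,j) \longto 0
$$
of internal degree~$0$, whose last map is the restriction to $K(-\infty,j)$ of $\eta \colon H(-\infty,j) \to H(i,j)$. I would obtain it by assembling the exact triangles~\eqref{eq:Hijk-exact-triangle} for the triples $(-\infty,i,j)$, $(-\infty,i,k)$, $(-\infty,j,k)$ and $(i,j,k)$, with $i \le j \le k < \infty$, into a single commutative diagram with exact rows and columns, in the manner of the proof of Theorem~\ref{thm:W''iskerkappa}, and passing to the colimit over~$k$. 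The exact triangle for $(-\infty,i,j)$ shows that $\ker(\eta \colon A'_j \to H(i,j)) = \im(\alpha^{j-i} \colon A'_i \to A'_j) = \im^{j-i} A'_j$, so the kernel of the restricted map is $K(-\infty,j) \cap \im^{j-i} A'_j = K_\infty \im^{j-i} A'_j$. For surjectivity, naturality of~$\partial$ gives $\partial_{(i,j,k)} = \eta \circ \partial_{(-\infty,j,k)}$, so after passing to colimits the map $\partial \colon D_j \to H(i,j)$ factors through $\partial \colon D_j \to A'_j$; since $K(i,j) = \im(\partial \colon D_j \to H(i,j))$ and $\im(\partial \colon D_j \to A'_j) = K(-\infty,j)$ by the colimit of the exact triangle for $(-\infty,j,k)$, it follows that $K(i,j) = \eta(K(-\infty,j))$. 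One also checks that this short exact sequence is natural in~$i$, with middle term the constant tower $K(-\infty,j)$, and natural in~$j$.

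The remaining steps are formal. Passing to limits over~$i$, and using $\lim_i K(-\infty,j) = K(-\infty,j)$ and $\Rlim_i K(-\infty,j) = 0$ for a constant tower, we obtain a natural exact sequence
$$
0 \longto \lim_i K_\infty \im^{j-i} A'_j \longto K(-\infty,j) \longto \lim_i K(i,j) \longto \Rlim_i K_\infty \im^{j-i} A'_j \longto 0 \,.
$$
Passing to colimits over~$j$, exactness of the filtered colimit together with the vanishing
$$
\colim_j K(-\infty,j) = \colim_j \ker(\iota_j \colon A'_j \to A'_\infty)
= \ker\bigl(\colim_j A'_j \overset{\cong}\longto A'_\infty\bigr) = 0
$$
(the $\iota_j$ being the colimit structure maps of $A'_\infty = \colim_j A'_j$) produces a natural isomorphism $\colim_j \lim_i K(i,j) \overset{\cong}\longto W'$ of internal degree~$0$. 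By Theorem~\ref{thm:lambda-kappa-sequence} the source is $\ker(\kappa)$, so inverting yields the asserted natural isomorphism $W' \overset{\cong}\longto \ker(\kappa)$.

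The main obstacle is the crucial step: setting up the short exact sequence $0 \to K_\infty \im^{j-i} A'_j \to K(-\infty,j) \to K(i,j) \to 0$, and checking by a handful of diagram chases among the exact triangles colimited over~$k$ both that the restriction of $\eta$ is onto $K(i,j)$ and that its kernel is precisely $K_\infty \im^{j-i} A'_j$. Everything afterwards is a mechanical application of the $\lim$--$\Rlim$ and $\colim$ exact sequences, exactly as on the right-handed side.
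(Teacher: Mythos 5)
Your proof is correct, and it reaches the conclusion by a genuinely different (though closely parallel) route from the paper's. The paper extracts from the same colimited-over-$k$ diagram the exact sequence $A'_\infty \to C(i,j) \overset{\partial}\longto K_\infty \im^{j-i} A'_j \to 0$ with $C(i,j) = \cok(\iota_{i,j})$, applies right-exactness of $\Rlim_i$ together with $\Rlim_i A'_\infty = 0$ to get $\Rlim_i C(i,j) \cong \Rlim_i K_\infty \im^{j-i} A'_j$, and then identifies $\colim_j \Rlim_i C(i,j)$ with $\ker(\kappa)$ via Theorem~\ref{thm:lambda-kappa-sequence}; there the internal degree~$0$ arises as the sum of the degree~$+1$ of the connecting isomorphism $\delta$ and the degree~$-1$ of $\partial \: D_j \to A'_j$. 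You instead extract the complementary slice of the same diagram, namely the short exact sequence $0 \to K_\infty \im^{j-i} A'_j \to K(-\infty,j) \overset{\eta}\longto K(i,j) \to 0$, and your two verifications --- that $K(i,j) = \eta(K(-\infty,j))$ via the factorization $\partial_{(i,j,k)} = \eta \circ \partial_{(-\infty,j,k)}$, and that the kernel of the restriction is $\ker(\iota_j) \cap \im^{j-i} A'_j = K_\infty \im^{j-i} A'_j$ --- are exactly the paper's identifications of the image and kernel of $\partial|\: C(i,j) \to A'_j$, read off along the other axis of the composite $D_j \overset{\partial}\longto A'_j \overset{\eta}\longto H(i,j)$. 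From there you use the full six-term $\lim$--$\Rlim$ sequence for a tower with constant middle term, plus the vanishing $\colim_j K(-\infty,j) = \colim_j \ker(\iota_j \: A'_j \to A'_\infty) = 0$, to land directly on $\colim_j \lim_i K(i,j) = \ker(\kappa)$. Both arguments lean on Theorem~\ref{thm:lambda-kappa-sequence} for the final identification; what yours buys is that the internal degree~$0$ is visible at once, since every map in your short exact sequence has degree~$0$, while the paper's version keeps a tighter formal parallel with the proof of Theorem~\ref{thm:W''iskerkappa}, where the boundary map genuinely carries degree~$-1$. All the supporting details you cite (the exactness of the triangle for $(-\infty,i,j)$ giving $\ker(\eta\: A'_j \to H(i,j)) = \im^{j-i}A'_j$, the naturality in $i$ and $j$, and the exactness of filtered colimits) check out.
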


\begin{proof}
The proof is very similar to that of Theorem~\ref{thm:W''iskerkappa}.
For each $-\infty < i \le j \le k$ we have a commutative diagram
$$
\xymatrix{
& H(i,j) \ar@{=}[r] \ar[d] & H(i,j) \ar[d]^-{\partial} \\
H(-\infty,k) \ar[r] \ar@{=}[d] & H(i,k) \ar[r]^-{\partial} \ar[d]
	& H(-\infty,i) \ar[r] \ar[d]
	& H(-\infty,k) \ar@{=}[d] \\
H(-\infty,k) \ar[r]
	& H(j,k) \ar[r]^-{\partial} \ar[d]^-{\partial}
	& H(-\infty,j) \ar[r] \ar[d]
	& H(-\infty,k) \\
& H(i,j) \ar@{=}[r] & H(i,j)
}
$$
with exact rows and columns.
Passing to colimits over $k$ we get the commutative diagram
$$
\xymatrix{
& H(i,j) \ar@{=}[r] \ar[d]^-{\iota_{i,j}} & H(i,j) \ar[d]^-{\partial} \\
A'_\infty \ar[r] \ar@{=}[d] & D_i \ar[r]^-{\partial} \ar[d]
        & A'_i \ar[r] \ar[d]^-{\alpha^{j-i}}
        & A'_\infty \ar@{=}[d] \\
A'_\infty \ar[r]
        & D_j \ar[r]^-{\partial} \ar[d]^-{\partial}
        & A'_j \ar[r]^-{\iota_j} \ar[d]
        & A'_\infty \\
& H(i,j) \ar@{=}[r] & H(i,j)
}
$$
with exact rows and columns.  Here $D_i = \colim_k H(i,k)$, $A'_i =
H(-\infty, i)$ and $A'_\infty = \colim_k A'_k$.  The homomorphism
$\partial \: D_j \to A'_j$ maps
$$
\ker(\partial \: D_j \to H(i,j)) = \im(D_i \to D_j) \cong
\cok(\iota_{i,j}) = C(i,j)
$$
onto
\begin{align*}
\im(\partial \: D_j \to A'_j) \cap \ker(A'_j \to H(i,j))
&= \ker(\iota_j \: A'_j \to A'_\infty)
        \cap \im(\alpha^{j-i} \: A'_i \to A'_j) \\
&= K_\infty \im^{j-i} A'_j \,,
\end{align*}
with kernel $\im(A'_\infty \to D_j)$.  Hence there is an
exact sequence
$$
A'_\infty \longto C(i,j) \overset{\partial}\longto
	K_\infty \im^{j-i} A'_j \to 0 \,.
$$
By right exactness of $\Rlim_i$, we obtain an exact sequence
$$
\Rlim_i A'_\infty \longto \Rlim_i C(i,j)
	\overset{\partial}\longto \Rlim_i K_\infty \im^{j-i} A'_j \to 0 \,,
$$
where $\Rlim_i A'_\infty = 0$.
Hence the right hand map is an isomorphism.  Passing to colimits over~$j$
we obtain the isomorphism
$$
\partial \: \colim_j \Rlim_i C(i,j) \overset{\cong}\longto
	\colim_j \Rlim_i K_\infty \im^{j-i} A'_j = W' \,.
$$
By Theorem~\ref{thm:lambda-kappa-sequence}, the left hand side
is isomorphic to $\ker(\kappa)$.
\end{proof}

\begin{definition}
An extended Cartan--Eilenberg system $(H, \partial)$ is
\emph{conditionally convergent} if the left couple $(A', E^1)$ is
conditionally convergent to the colimit and the right couple $(A'',
E^1)$ is conditionally convergent to the limit.
\end{definition}

\begin{remark}
We summarize the discussion.
The spectral sequence $(E^r, d^r)_r$ associated to a extended
Cartan--Eilenberg system $(H, \partial)$ has three plausible target
groups, namely $A'_\infty = \colim_j H(-\infty, j)$, $H(-\infty, \infty)$
and $A''_{-\infty} = \lim_i H(i, \infty)$.
$$
\xymatrix{
A'_i \ar[r]^-{\alpha^{j-i}}
	& A'_j \ar[r]^-{\iota_j} \ar[d]^-{\tilde\eta}
	& A'_\infty \ar[rr]^-{\bar\eta} \ar[d]
	&& H(-\infty,\infty) \ar[dd]^-{\tilde\eta} \\
& \lim_i H(i, j) \ar[r]^-{\iota_j} \ar[dd]^-{\pi_{i,j}}
	& \colim_j \lim_i H(i, j) \ar[dr]^-{\kappa} \\
& & & \lim_i \colim_j H(i,j) \ar[r] \ar[d]^-{\pi_i}
	& A''_{-\infty} \ar[d]^-{\pi_i} \\
& H(i,j) \ar[rr]^-{\iota_{i,j}}
	&& \colim_j H(i,j) \ar[r]^-{\bar\eta}
	& A''_i \ar[d]^-{\alpha^{j-i}} \\
& && & A''_j
}
$$
When $(H, \partial)$ is conditionally convergent,
Theorems~\ref{thm:GH2.2.5} and~\ref{thm:GH2.2.4} apply to the left
couple $(A', E^1)$ and to the right couple $(A'', E^1)$, respectively.
Hence there are isomorphisms $\colim_j H(i,j) \cong H(i,\infty)$ for all
$i \in \bZ_{-\infty}$, and short exact sequences $0 \to \Rlim_i H(i,j)
\to H(-\infty, j) \to \lim_i H(i,j) \to 0$ for all $j \in \bZ_{+\infty}$.
In particular,
$$
\bar\eta \: A'_\infty \overset{\cong}\longto H(-\infty, \infty)
$$
in an isomorphism and
$$
0 \to RA''_{-\infty} \overset{\partial}\longto
	H(-\infty, \infty) \overset{\tilde\eta}\longto A''_{-\infty} \to 0
$$
is exact.
Under the additional hypothesis $RE^\infty = 0$, we know that $(E^r,
d^r)_r$ converges weakly to a complete filtration of $A'_\infty$,
with $F_{-\infty} A'_\infty \cong W$.  We also know that $(E^r, d^r)$
converges strongly to $A''_{-\infty}$, with $RA''_{-\infty} \cong W$.
Hence, if $W = 0$ then $(E^r, d^r)$ converges strongly to $A'_\infty
\cong H(-\infty,\infty) \cong A''_{-\infty}$.  When $W \ne 0$, the
spectral sequence is not strongly convergent to $A'_\infty$, because
the filtration $\{F_s A'_\infty\}_s$ fails to be Hausdorff, with limit
$F_{-\infty} A'_\infty \cong W$.  The spectral sequence is strongly
convergent to $A''_{-\infty}$, which is the quotient of $H(-\infty,
\infty)$ by $RA''_{-\infty} \cong W$.  Hence $H(-\infty,\infty)$ plays
the role of the ideal target group, and $W \cong \ker(\kappa)$ precisely
measures the failure of the spectral sequence to converge strongly to
this target.
\end{remark}

\section{Sequences of spectra}

Cartan--Eilenberg systems naturally arise from filtered objects, and
are well suited for the construction of multiplicative spectral sequences.
Consider a biinfinite sequence of spectra
$$
X_{-\infty} \to \dots \to X_{s-1} \to X_s \to \dots \to X_\infty \,,
$$
with $X_{-\infty} = \holim_s X_s$ and $X_\infty = \hocolim_s X_s$.
We obtain an extended Cartan--Eilenberg system $(H, \partial)$, with
$$ H(i,j) = \pi_*(\cone(X_i \to X_j)) $$ for $-\infty \le i \le j
\le \infty$, and two exact couples $(A', E^1)$ and $(A'', E^1)$, with
\begin{align*} A'_s &= \pi_*(\cone(X_{-\infty} \to X_s)) \\ A''_s &=
\pi_*(\cone(X_s \to X_\infty)) \end{align*} for $s \in \bZ$.  The three
associated spectral sequences are all equal, and begin with
$$
E^1_s = \pi_*(\cone(X_{s-1} \to X_s)) \,.
$$
A typical aim is to calculate $G = \pi_*(X_\infty)$ under the assumption
that $X_{-\infty} \simeq *$.  Under these hypotheses the extended
Cartan--Eilenberg system $(H, \partial)$ is conditionally convergent,
because of the short exact sequence
$$
0 \to \Rlim_s \pi_{*+1}(X_s) \longto \pi_*(X_{-\infty})
	\longto \lim_s \pi_*(X_s) \to 0
$$
and the isomorphism
$$
\colim_s \pi_*(X_s) \overset{\cong}\longto \pi_*(X_\infty) \,.
$$

The first (left) exact couple $(A', E^1)$ is conditionally
convergent to the colimit
$$
A'_\infty = \colim_s \pi_*(\cone(X_{-\infty} \to X_s)) \cong G \,,
$$
which is the target of interest, cf.~Theorem~\ref{thm:GH2.2.5}, which is
a variant of \cite{Boa99}*{8.10}.  When $RE^\infty = 0$, which can often
be verified from the differential structure in the spectral sequence,
the spectral sequence is weakly convergent to a complete filtration $\{F_s
A'_\infty\}_s$ of $G$, but the filtration may fail to be Hausdorff.  We can
therefore only hope to recover the quotient $G/F_{-\infty} A'_\infty$,
where $F_{-\infty} A'_\infty = \lim_s F_s A'_\infty$ is the limit of the
filtration.  In this case there is an isomorphism $W' \cong F_{-\infty}
A'_\infty$, where $W'$ is Boardman's group for the exact couple $(A',
E^1)$, and our Theorem~\ref{thm:W'iskerkappa} identifies this error term
with $\ker(\kappa)$.

The second (right) exact couple $(A'', E^1)$ is conditionally convergent to
the limit
$$
A''_{-\infty} = \lim_s \pi_*(\cone(X_s \to X_\infty)) \cong G/RA''_{-\infty}
\,,
$$
where $RA''_{-\infty} = \Rlim_s \pi_{*+1}(\cone(X_s \to X_\infty))$,
cf.~Theorem~\ref{thm:GH2.2.4}, which is a variant of \cite{Boa99}*{8.13}.
When $RE^\infty = 0$ the spectral sequence is strongly convergent to this
limit.  Since $G$ is the group we are principally interested in, we also
need to understand the subgroup $RA''_{-\infty}$.  In this case there is
an isomorphism $W'' \cong RA''_{-\infty}$, where $W''$ is Boardman's group
for the exact couple $(A'', E^1)$, and our Theorem~\ref{thm:W''iskerkappa}
identifies this error term with $\ker(\kappa)$.

\begin{example}
Let $Hk$ be the Eilenberg--Mac\,Lane spectrum of a field~$k$,
and let
$$
X_s = \prod_{i \ge |s|} Hk
$$
for $s, i \in \bZ$.  Let $X_{s-1} \to X_s$ be given by the identity map
on the $i$-th factor, except when $s \le 0$ and $i = |s|$, when it is
given by $* \to Hk$, and when $s > 0$ and $i = |s|-1$, when it is given
by $Hk \to *$.  Then
$$
X_\infty \simeq \cone(\bigvee_{i\ge0} Hk \to \prod_{i\ge0} Hk)
$$
and $X_{-\infty} \simeq *$.  We obtain an exact couple $(A', E^1)$
converging conditionally to colimit, with $A'_s = \prod_{i\ge |s|} k$,
$A'_\infty = \prod_{i\ge0} k/\bigoplus_{i\ge0} k$, $A'_{-\infty} = 0$
and $RA'_{-\infty} = 0$.  Here $E^1_s = k$ for $s\le 0$ and $E^1_s =
\Sigma k$ for $s\ge1$, with $d^{2r-1}_r \: E^{2r-1}_r \to E^{2r-1}_{1-r}$
an isomorphism for each $r\ge1$.  Hence $E^\infty = 0$ and $RE^\infty
= 0$.  Also $\iota_s \: A'_s \to A'_\infty$ is surjective for each $s$,
so $F_s A'_\infty = A'_\infty$.  Hence
$$
W' = F_{-\infty} A'_\infty = F_\infty A'_\infty
= \prod_{i\ge0} k/\bigoplus_{i\ge0} k
$$
and $RF_{-\infty} A'_\infty = 0$.  Note that this is not a half-plane
spectral sequence with exiting or entering differentials, in the sense
of \cite{Boa99}*{\S6, \S7}, even if the $E^1$-term is concentrated
in the intersection of the two half-planes $t\ge0$ and $t\le1$, where $t$
is the internal degree.
\end{example}

\begin{example}
Let $J_p = L_{K(1)} S$ be the image-of-$J$ spectrum, completed at an
odd prime~$p$.  Its homotopy groups are
$$
\pi_n(J_p) \cong \begin{cases}
\bZ_p & \text{for $n \in \{-1,0\}$,} \\
\bZ/p^{m+1} & \text{for $n = (2p-2)p^m q - 1$, $p \nmid q$,} \\
0 & \text{otherwise,}
\end{cases}
$$
and its mod~$p$ homotopy groups form the graded ring
$$
\pi_*(J/p) = \bZ/p[\alpha, v^{\pm1}]/(\alpha^2) \,,
$$
with $|\alpha| = 2p-3$ and $|v| = 2p-2$.  Let $J_p^{tS^1} = [\tilde ES^1
\wedge F(ES^1_+, J_p)]^{S^1}$ denote the Tate construction \cite{GM95} for
the trivial $S^1$-action on $J_p$.  The biinfinite Greenlees filtration of
$\tilde ES^1$ leads to a sequence of spectra with associated whole-plane
Tate spectral sequence
$$
\hat E^2_{s,n}(S^1, J_p)
	\Longrightarrow_s \pi_{s+n}(J_p^{tS^1}) \,.
$$
Any filtration of $\bZ_p$ is complete, so $RE^\infty = 0$.  For bidegree
reasons the only nonzero differentials are of the form $d^r_{s,n}$ with
$s\ne0$ even and $n=0$, so $W = 0$ by Boardman's criterion.  Hence this
spectral sequence is strongly convergent, to the abutment calculated by
Hesselholt and Madsen in~\cite{HM92}*{0.2}.  On the other hand,
the $S^1$-Tate spectral sequence for $J/p$ is
\begin{align*}
\hat E^2_{s,n}(S^1, J/p)
	&= \bZ/p[t^{\pm1}] \otimes
        \bZ/p[\alpha, v^{\pm1}]/(\alpha^2) \\
        &\Longrightarrow_s \pi_{s+n}(J^{tS^1}/p) \,.
\end{align*}
B{\"o}kstedt and Madsen \cite{BM94} showed that
it has nonzero differentials
$$
d^{2(p^{k+1}-1)}(t^{p^k-p^{k+1}}) \doteq
	v^{p(p^k-1)/(p-1)} \cdot t^{p^k-1} \alpha
$$
(up to units in $\bZ/p$) for each $k\ge0$, and
the classes $\alpha$ and $v$ are infinite cycles.
Hence
$$
\hat E^\infty_{*,*}(S^1, J/p)
	= \bZ/p[v^{\pm1}] \{1, t^{-1} \alpha\}
$$
and $RE^\infty = 0$.  The spectral sequence is thus weakly convergent to
$G = \bZ/p[v^{\pm1}] \{1, t^{-1} \alpha\}$, for a complete filtration that
might not be Hausdorff.  Boardman's criterion for $W = 0$ applies for the
differentials landing in even total degrees, but not for the differentials
landing in odd total degrees.  Indeed, the surjection $\pi_*(J^{tS^1}/p)
\onto G$ with kernel $W \cong F_{-\infty} A_\infty$ is an isomorphism
in even degrees, but has the large kernel $(\prod_{i\ge0}
\bZ/p)/(\bigoplus_{i\ge0} \bZ/p)$ in each odd degree \cite{HM92}*{4.4}.
\end{example}

\begin{example}
Similar patterns are found in the Tate spectral sequences for $\hat
T(\bZ)^{tS^1}$ and $\hat T(\bZ)^{tS^1}/p$ for odd primes~$p$, where
$T(\bZ) = THH(\bZ)$ denotes the topological Hochschild homology
spectrum of the integers and $\varphi_p \: T(\bZ) \to \hat T(\bZ)
= T(\bZ)^{tC_p}$ is its $p$-cyclotomic structure map, in the sense
of \cite{NS}*{II.1.1}.  Here $TP(\bZ) = T(\bZ)^{tS^1}$ is expected
to have deep arithmetic significance,
by analogy with the results for smooth and proper schemes over finite
fields in~\cite{Hes}, but the structure of its Tate spectral sequence
is not fully known, cf.~\cite{Rog98}*{\S3} and~\cite{Rog99}*{\S1}.
The $S^1$-Tate spectral sequence
$$
\hat E^2_{*,*}(S^1, \hat T(\bZ))
	\Longrightarrow \pi_*(\hat T(\bZ)^{tS^1})
$$
is strongly convergent.  The mod~$p$ spectral sequence
\begin{align*}
\hat E^2_{*,*}(S^1, \hat T(\bZ)/p)
	&= \bZ/p[t^{\pm1}] \otimes \bZ/p[e,f^{\pm1}]/(e^2) \\
	&\Longrightarrow \pi_*(\hat T(\bZ)^{tS^1}/p) \,,
\end{align*}
where $|e| = 2p-1$ and $|f| = 2p$, is only known to be weakly convergent
for a complete filtration.  B{\"o}kstedt and Madsen \cite{BM95} showed
that the latter spectral sequence has nonzero differentials
$$
d^{2p(p^{k+1}-1)/(p-1)} (t^{p^k-p^{k+1}})
	\doteq (tf)^{p(p^k-1)/(p-1)} \cdot t^{p^k}e
$$
(up to units in $\bZ/p$), for each $k\ge0$.  Hence
$$
\hat E^\infty_{*,*}(S^1, \hat T(\bZ)/p)
	= \bZ/p[(tf)^{\pm1}] \{1, e\} \,,
$$
and there is a surjection $\pi_*(\hat T(\bZ)^{tS^1}/p) \onto G =
\bZ/p[(tf)^{\pm1}] \{1, e\}$, with kernel $W \cong F_{-\infty} A_\infty$.
By Boardman's criterion, $W$ is again zero in even total degrees, but may,
very well, be nonzero in odd total degrees.
\end{example}

\begin{bibdiv}
\begin{biblist}

\bib{Boa99}{article}{
   author={Boardman, J. Michael},
   title={Conditionally convergent spectral sequences},
   conference={
      title={Homotopy invariant algebraic structures},
      address={Baltimore, MD},
      date={1998},
   },
   book={
      series={Contemp. Math.},
      volume={239},
      publisher={Amer. Math. Soc., Providence, RI},
   },
   date={1999},
   pages={49--84},
   review={\MR{1718076}},
}

\bib{BM94}{article}{
   author={B\"okstedt, M.},
   author={Madsen, I.},
   title={Topological cyclic homology of the integers},
   note={$K$-theory (Strasbourg, 1992)},
   journal={Ast\'erisque},
   number={226},
   date={1994},
   pages={7--8, 57--143},
   issn={0303-1179},
   review={\MR{1317117}},
}

\bib{BM95}{article}{
   author={B\"okstedt, M.},
   author={Madsen, I.},
   title={Algebraic $K$-theory of local number fields: the unramified case},
   conference={
      title={Prospects in topology},
      address={Princeton, NJ},
      date={1994},
   },
   book={
      series={Ann. of Math. Stud.},
      volume={138},
      publisher={Princeton Univ. Press, Princeton, NJ},
   },
   date={1995},
   pages={28--57},
   review={\MR{1368652}},
}

\bib{CE56}{book}{
   author={Cartan, Henri},
   author={Eilenberg, Samuel},
   title={Homological algebra},
   publisher={Princeton University Press, Princeton, N. J.},
   date={1956},
   pages={xv+390},
   review={\MR{0077480}},
}

\bib{GM95}{article}{
   author={Greenlees, J. P. C.},
   author={May, J. P.},
   title={Generalized Tate cohomology},
   journal={Mem. Amer. Math. Soc.},
   volume={113},
   date={1995},
   number={543},
   pages={viii+178},
   issn={0065-9266},
   review={\MR{1230773}},
}

\bib{HM92}{article}{
   author={Hesselholt, Lars},
   author={Madsen, Ib},
   title={The $S^1$-Tate spectrum for $J$},
   note={Papers in honor of Jos\'e Adem (Spanish)},
   journal={Bol. Soc. Mat. Mexicana (2)},
   volume={37},
   date={1992},
   number={1-2},
   pages={215--240},
   review={\MR{1317575}},
}

\bib{Hes}{article}{
   author={Hesselholt, Lars},
   title={Topological Hochschild homology and the Hasse-Weil zeta function},
   note={{\tt arXiv:1602.01980}, to appear},
}

\bib{Kos47}{article}{
   author={Koszul, Jean-Louis},
   title={Sur les op\'erateurs de d\'erivation dans un anneau},
   language={French},
   journal={C. R. Acad. Sci. Paris},
   volume={225},
   date={1947},
   pages={217--219},
   review={\MR{0022345}},
}

\bib{Ler46}{article}{
   author={Leray, Jean},
   title={Structure de l'anneau d'homologie d'une repr\'esentation},
   language={French},
   journal={C. R. Acad. Sci. Paris},
   volume={222},
   date={1946},
   pages={1419--1422},
   review={\MR{0016665}},
}

\bib{LurHA}{book}{
   author={Lurie, Jacob},
   title={Higher Algebra},
   note={{\tt www.math.harvard.edu/$\sim$lurie/papers/HA.pdf}},
}

\bib{ML98}{book}{
   author={Mac Lane, Saunders},
   title={Categories for the working mathematician},
   series={Graduate Texts in Mathematics},
   volume={5},
   edition={2},
   publisher={Springer-Verlag, New York},
   date={1998},
   pages={xii+314},
   isbn={0-387-98403-8},
   review={\MR{1712872}},
}

\bib{Mas52}{article}{
   author={Massey, W. S.},
   title={Exact couples in algebraic topology. I, II},
   journal={Ann. of Math. (2)},
   volume={56},
   date={1952},
   pages={363--396},
   issn={0003-486X},
   review={\MR{0052770}},
}

\bib{Nee02}{article}{
   author={Neeman, Amnon},
   title={A counterexample to a 1961 ``theorem'' in homological algebra},
   note={With an appendix by P. Deligne},
   journal={Invent. Math.},
   volume={148},
   date={2002},
   number={2},
   pages={397--420},
   issn={0020-9910},
   review={\MR{1906154}},
}

\bib{NS}{article}{
   author={Nikolaus, Thomas},
   author={Scholze, Peter},
   title={On topological cyclic homology},
   note={{\tt arXiv:1707.01799}},
}

\bib{Rog98}{article}{
   author={Rognes, John},
   title={Trace maps from the algebraic $K$-theory of the integers (after
   Marcel B\"okstedt)},
   journal={J. Pure Appl. Algebra},
   volume={125},
   date={1998},
   number={1-3},
   pages={277--286},
   issn={0022-4049},
   review={\MR{1600028}},
}

\bib{Rog99}{article}{
   author={Rognes, John},
   title={Topological cyclic homology of the integers at two},
   journal={J. Pure Appl. Algebra},
   volume={134},
   date={1999},
   number={3},
   pages={219--286},
   issn={0022-4049},
   review={\MR{1663390}},
}

\bib{Roo61}{article}{
   author={Roos, Jan-Erik},
   title={Sur les foncteurs d\'eriv\'es de $\underleftarrow\lim$. Applications},
   language={French},
   journal={C. R. Acad. Sci. Paris},
   volume={252},
   date={1961},
   pages={3702--3704},
   review={\MR{0132091}},
}

\bib{Roo06}{article}{
   author={Roos, Jan-Erik},
   title={Derived functors of inverse limits revisited},
   journal={J. London Math. Soc. (2)},
   volume={73},
   date={2006},
   number={1},
   pages={65--83},
   issn={0024-6107},
   review={\MR{2197371}},
}

\end{biblist}
\end{bibdiv}

\end{document}